\let\accentvec\vec
\documentclass[smallextended,referee,envcountsect]{svjour3}
\smartqed

\usepackage{graphicx}
\usepackage{amssymb}
\usepackage[numbers,sort&compress]{natbib}

\journalname{JOTA}

\let\vec\accentvec
\usepackage{amsmath}
\usepackage[colorlinks, citecolor=blue]{hyperref}
\newtheorem{assumption}{Assumption}[section]
\newtheorem{alemma}{Lemma}
\usepackage{anyfontsize}
\usepackage{bookmark}
\makeatletter
\newcommand\subparagraph{%
  \@startsection{subparagraph}{5}
  {\parindent}
  {3.25ex \@plus 1ex \@minus .2ex}
  {-1em}
  {\normalfont\normalsize\bfseries}}
\makeatother
\usepackage{titlesec}
\let\subparagraph\relax
\usepackage{titlesec}
\titleformat{\section}{\normalsize\bfseries}{\thesection}{1em}{}
\titleformat{\subsection}{\normalsize\bfseries}{\thesubsection}{1em}{}

\begin{document}

\title{Kurdyka-{\L}ojasiewicz Property of Zero-Norm Composite Functions}


\author{Yuqia Wu   \and  Shaohua Pan \and Shujun Bi}

\author{Yuqia Wu\textsuperscript{1} \and Shaohua Pan\textsuperscript{1} \and Shujun Bi\textsuperscript{1} }

\institute{Yuqia Wu \\
            math$\_$wuyuqia@mail.scut.edu.cn \\
          Shaohua Pan  \\
            shhpan@scut.edu.cn
            \\
           Shujun Bi (Corresponding author)\\
              bishj@scut.edu.cn \\
              $^1$ \ School of Mathematics, South China University of Technology, Guangzhou, China.}

\date{Received: date / Accepted: date}

\maketitle

\begin{abstract}
  This paper focuses on a class of zero-norm composite optimization problems.
 For this class of nonconvex nonsmooth problems, we establish the Kurdyka-{\L}ojasiewicz
 property of exponent being a half for its objective function under a suitable assumption,
 and provide some examples to illustrate that such an assumption is not very restricted
 which, in particular, involve the zero-norm regularized or constrained piecewise
 linear-quadratic function, the zero-norm regularized or constrained logistic regression function,
 the zero-norm regularized or constrained quadratic function over a sphere.

\end{abstract}
\keywords{KL property of exponent $1/2$ \and zero-norm \and composite optimization}
\subclass{90C26 \and  90C56 \and 49J50}


\section{Introduction}\label{sec1}

 The KL property is an important tool in analysis of optimization,
 dynamic system, partial differential equations, and other directions of applied
 mathematics (see the review paper \cite{Bolte09} and the references therein).
 From \cite[Section 4]{Attouch10}, any subanalytic functions, even more
 general functions with O-minimal structure, automatically satisfy the KL property.
 For the past several years, it has witnessed that the successful application of the KL property
 in analyzing the global convergence of first-order methods for nonconvex
 nonsmooth optimization problems (see, e.g., \cite{Attouch10,Attouch13,Bolte14}).
 In particular, the KL property of exponent 1/2 plays a crucial role in achieving
 the linear convergence rate. As discussed recently in \cite{PanLiu19}
 (see also \cite{Bolte17,WangY18}), for primal lower nice functions,
 the KL property of exponent $1/2$ is usually weaker than the metric subregularity
 of their subdifferential operators \cite{Artacho08} or the Luo-Tseng error bound
 \cite{Tseng09}, which are the common regularity to achieve the linear convergence
 of first-order methods (see, e.g., \cite{Luo92,WenChen17,Zhou17,Cui18}).
 Thus, a valuable research direction is to identify which class of functions
 precisely possesses the KL property of exponent $1/2$.

 Although many classes of functions indeed satisfy the KL property,
 it is not an easy task to estimate the exponent of KL property,
 especially to verify whether they have the KL property of exponent $1/2$.
 Recently, some positive progress have been made in this direction;
 for example, there are some prominent results on how to estimate
 the exponents of KL property in polynomial setting or more general
 semi-algebraic setting (see \cite{Acunto05,Li15,Li18}), and
 some important calculation rules have been developed in \cite{LiPong18,Yu19}
 to identify the exponent of KL property for composite functions
 in terms of the exponent of KL property for its components.
 Although it has explained in \cite{Acunto05} that almost all
 twice continuously differentiable functions have the KL property of
 exponent $1/2$, the deterministic conclusion still requires analysis
 of case by case, especially for those difficult nonconvex nonsmooth functions.
 We notice that Liu et al. \cite{Liu18} established a restricted-type
 KL property of exponent $1/2$ for the quadratic function over orthogonal constraints,
 and Zhang et al. \cite{Zhang18} verified the KL property of exponent 1/2 for several
 classes of regularized matrix factorization functions in the set of their global optima.
 In this work, we focus on the KL property of exponent of $1/2$
 for a class of zero-norm composite functions.

\section{Model and Main Contribution}\label{sec2}
 Let $f:\mathbb{R}^{p}\to\mathbb{R}$ be a smooth function,
 and let $\theta:\mathbb{R}^p\to]-\infty,\infty]$ be a closed proper function.
 We are interested in the following composite problem
 \begin{equation}\label{Theta}
  \min_{x\in\mathbb{R}^p}\Big\{\Theta(x):=f(x)+\theta(x)+h(x)\Big\}
 \end{equation}
 with $h(x):=\nu\|x\|_0$ or $h(x):=\delta_{\Omega}(x)$ for $x\in\mathbb{R}^p$,
 where $\nu>0$ is a regularization parameter, $\|x\|_0$ is the zero-norm
 (cardinality) of $x$, and $\delta_{\Omega}(\cdot)$ denotes the indicator function
 of $\Omega:=\{x\in\mathbb{R}^p:\ \|x\|_0\le\kappa\}$ for a positive integer $\kappa$.

  Since the minimization of $\theta+h$ can be used to capture the structured sparsity,
 the problem \eqref{Theta} has some important applications in a host of fields
 such as statistics, machine learning, signal and image processing, biology,
 and so on. A typical application is the sparse eigenvalue problem such as
 the sparse PCA (see, e.g., \cite{Zou06,Journee10,Yuan13,Asteris14}), for which $\theta$
 is taken as the indicator function of the (nonnegative) unit sphere.
 Another one is the sparse portfolio problem (see, e.g., \cite{Brodie09}),
 for which $\theta$ corresponds to the indicator function of a simplex set.
 In addition, this model also appears as a module in some matrix factorization
 algorithms for nonnegative low rank optimization problems \cite{Zhang17}.

 The main contribution of this work is to establish the KL property of
 exponent $1/2$ for the zero-norm composite function $\Theta$.
 In Section \ref{sec4}, by exploiting the structure of the zero-norm
 function $h$, we show that $\Theta$ is the KL function of exponent $1/2$
 whenever the associated proper lsc function $\theta$ satisfies Assumption \ref{assump1},
 and illustrate that this assumption can be satisfied by several classes of
 functions such as the zero-norm regularized or constrained piecewise linear-quadratic
 functions, the zero-norm regularized or constrained logistic regression function,
 the zero-norm regularized or constrained quadratic optimization problem over a sphere,
 and so on. It is worthwhile to point out that some zero-norm regularized
 and constrained optimization problems were discussed in \cite[Section 5]{Attouch13}
 and \cite[Section 4]{Bolte14}, but the KL property of exponent $1/2$ was not provided there.
 Since the function $h=\delta_{\Omega}$ can be represented as the minimum of finitely
 many proper closed polyhedral functions, when $f$ is convex quadratic function
 and $\theta$ is polyhedral, the KL property of exponent $1/2$
 of $\Theta$ is immediate by \cite[Corollary 5.1]{LiPong18}; when $f$ is a general
 quadratic function and $\theta$ is polyhedral, though the function $\Theta$
 associated to $h=\nu\|\cdot\|_0$ can be represented as the form of \cite[Equation(35)]{LiPong18}
 and \cite[Corollary 5.2]{LiPong18} can be used to identify its KL property
 of exponent $1/2$, the reformulated function is different from $\Theta$
 since they may have different critical point sets.

 \section{Notation and Preliminaries}\label{sec3}

  Throughout this paper, $\mathbb{R}^p$ denotes the $p$-dimensional Euclidean space.
  For a given $\overline{x}\in\mathbb{R}^p$ and $\delta>0$, $\mathbb{B}(\overline{x},\delta)$
  denotes the closed ball centered at $\overline{x}$ of radius $\delta$;
  and for a set $C\subseteq\mathbb{R}^p$, $\delta_C(\cdot)$ means
  the indicator function of $C$, and when $C$ is convex,
  $C^{\infty}$ denotes the recession cone of $C$.
  For an extended real-valued $f:\mathbb{X}\to]-\infty,\infty]$,
  write $[\alpha\le f\le\beta]:=\{x\in\mathbb{X}:\ \alpha\le f(x)\le \beta\}$
  for $\alpha,\beta\in\mathbb{R}$, and say that $f$ is proper if
  ${\rm dom}f$ is nonempty.
  The notation $x'\xrightarrow[f]{}x$ to signify $x'\to x$ and $f(x')\to f(x)$.
  For a vector $x$, $[\![x]\!]$ denotes the subspace generated by $x$,
  and $[\![x]\!]^{\perp}$ means its orthogonal complement.
  For an $m\times n$ matrix $H$ and index sets $I\subseteq\{1,\ldots,m\}$
  and $J\subseteq\{1,\ldots,n\}$, $H_{\!J}$ means the matrix
  consisting of those columns $H_j$ for $j\in J$,
  and $H_{I\!J}$ means the matrix consisting of those entries
  $H_{ij}$ with $(i,j)\in I\times J$. The notation $\mathcal{S}$
  and $E$ denote a unit sphere and an identity matrix whose dimensions
  are known from the context.

 \subsection{Generalized Subdifferentials}\label{sec3.1}

  We first recall several subdifferential notions needed in the subsequent sections.
  The reader can find more information and references in the books \cite{RW98,Mordu06}.
  \begin{definition}\label{Gsubdiff-def}
  Consider a function $f\!:\mathbb{R}^p\to]-\!\infty,\infty]$ and a point
  $x\in{\rm dom}f$, the regular subdifferential of $f$ at $x$ is defined as
  \[
    \widehat{\partial}\!f(x):=\bigg\{v\in\mathbb{R}^p: \
    \liminf_{x'\to x,x'\ne x}
    \frac{f(x')-f(x)-\langle v,x'-x\rangle}{\|x'-x\|}\ge 0\bigg\};
  \]
  the (basic) subdifferential (also known as the limiting
  or Mordukhovich subdifferential) of the function $f$ at $x$ is defined as
  \[
    \partial\!f(x):=\Big\{v\in\mathbb{R}^p:\  \exists\,x^k\xrightarrow[f]{}x\ {\rm and}\
    v^k\in\widehat{\partial}\!f(x^k)\ {\rm with}\ v^k\to v\ {\rm as}\ k\to\infty\Big\};
  \]
  and the horizon subdifferential (also known as the singular subdifferential)
  of the function $f$ at $x$ is defined as
 \[
    \partial^{\infty}f(x)\!:=\!\Big\{v\in\mathbb{R}^p:\  \exists\,x^k\xrightarrow[f]{}x,
    \lambda^k\downarrow 0\ {\rm and}\ v^k\!\in\widehat{\partial}\!f(x^k)\ {\rm s.t.}\ \lambda^kv^k\to v\ {\rm as}\ k\to\infty\Big\}.
  \]
 \end{definition}

  \begin{remark}\label{remark-Fsubdiff}
   Let $\{(x^k,v^k)\}_{k\in\mathbb{N}}$ be a sequence in graph
  ${\rm gph}\partial\!f$ that converges to $(x,v)$ as $k\to\infty$.
  By Definition \ref{Gsubdiff-def}, if $f(x^k)\to f(x)$ as $k\to\infty$,
  then $(x,v)\in{\rm gph}\partial\!f$. The point $\overline{x}$ at which
  $0\in\partial\!f(\overline{x})$ is called a (basic) critical point of $f$.
  In the sequel, we denote by ${\rm crit}f$ the set of critical points of $f$.
 \end{remark}

  Let $C\subseteq\mathbb{R}^p$ be a closed set. By \cite[Exercise 8.14]{RW98},
  the (regular) subdifferential of the indicator function $\delta_C$ at a point
  $\overline{x}\in C$ is precisely the (regular) normal cone to
  $C$ at $\overline{x}$. For the definitions of the regular normal cone
  $\widehat{\mathcal{N}}_C(\overline{x})$, the normal cone
  $\mathcal{N}_C(\overline{x})$ and the proximal normal cone
  $\widetilde{\mathcal{N}}_C(\overline{x})$ of $C$ at $\overline{x}$,
  please refer to \cite[Chapter 6]{RW98}. The following lemma provides
  the generalized subdifferential characterizations of $\delta_{\mathcal{S}}$.
  Since its proof is easily obtained by using
  \cite[Exercise 6.7\&Exercise 8.14]{RW98}, we here omit it.
 \begin{lemma}\label{Gsubdiff-sphere}
  For any $\overline{x}\in\mathcal{S}$, the unit sphere,
  the following equalities hold:
  \begin{equation*}
   \widehat{\partial}\delta_{\mathcal{S}}(\overline{x})
   =\partial\delta_{\mathcal{S}}(\overline{x})
   =\{\omega\overline{x}:~\omega \in \mathbb{R}\}
   =\partial^{\infty}\delta_{\mathcal{S}}(\overline{x})
   =\big[\widehat{\partial}\delta_{\mathcal{S}}(\overline{x})\big]^{\infty}.
  \end{equation*}
 \end{lemma}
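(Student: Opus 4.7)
The plan is to reduce everything to normal-cone calculations for the smooth manifold $\mathcal{S}$, via the standard identification of the subdifferentials of an indicator function with the corresponding normal cones (Exercise 8.14 of \cite{RW98}):
\[
\widetilde{\partial}\delta_{\mathcal{S}}(\overline{x})=\widetilde{\mathcal{N}}_{\mathcal{S}}(\overline{x}),\quad
\widehat{\partial}\delta_{\mathcal{S}}(\overline{x})=\widehat{\mathcal{N}}_{\mathcal{S}}(\overline{x}),\quad
\partial\delta_{\mathcal{S}}(\overline{x})=\mathcal{N}_{\mathcal{S}}(\overline{x}).
\]
The first stage is to show $\widehat{\mathcal{N}}_{\mathcal{S}}(\overline{x})=\mathcal{N}_{\mathcal{S}}(\overline{x})=\{\omega\overline{x}\mid\omega\in\mathbb{R}\}$. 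Because $\mathcal{S}$ is the level set $\{x\mid\|x\|^{2}=1\}$ of a $C^{\infty}$ map whose gradient $2\overline{x}\neq 0$, Exercise~6.7 of \cite{RW98} gives the Bouligand tangent cone $T_{\mathcal{S}}(\overline{x})=[\![\overline{x}]\!]^{\perp}$, whose polar is exactly $[\![\overline{x}]\!]=\{\omega\overline{x}\mid\omega\in\mathbb{R}\}$; this identifies $\widehat{\mathcal{N}}_{\mathcal{S}}(\overline{x})$. Since the sphere is a $C^{\infty}$ manifold (hence Clarke-regular at every point), the limiting normal cone coincides with its regular counterpart.

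Next I would handle the horizon subdifferential $\partial^{\infty}\delta_{\mathcal{S}}(\overline{x})$. The inclusion $\partial^{\infty}\delta_{\mathcal{S}}(\overline{x})\subseteq\mathcal{N}_{\mathcal{S}}(\overline{x})$ is automatic from the fact that $\widehat{\mathcal{N}}_{\mathcal{S}}(x^{k})$ is a cone, so $\lambda^{k}v^{k}\in\widehat{\mathcal{N}}_{\mathcal{S}}(x^{k})$ whenever $v^{k}$ is, and the limit lies in $\mathcal{N}_{\mathcal{S}}(\overline{x})$. For the reverse, given any $v\in\mathcal{N}_{\mathcal{S}}(\overline{x})$ with approximating sequence $v^{k}\in\widehat{\mathcal{N}}_{\mathcal{S}}(x^{k})$, take $\lambda^{k}=1/k$ and replace $v^{k}$ by $k v^{k}$ (still in the cone); then $\lambda^{k}(kv^{k})=v^{k}\to v$, giving the inclusion. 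Thus all three of $\widehat{\partial}\delta_{\mathcal{S}}(\overline{x})$, $\partial\delta_{\mathcal{S}}(\overline{x})$, and $\partial^{\infty}\delta_{\mathcal{S}}(\overline{x})$ equal $\{\omega\overline{x}\mid\omega\in\mathbb{R}\}$. The recession-cone identity $[\widehat{\partial}\delta_{\mathcal{S}}(\overline{x})]^{\infty}=\{\omega\overline{x}\mid\omega\in\mathbb{R}\}$ is then immediate, since the recession cone of a one-dimensional subspace is the subspace itself.

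It remains to verify $\widetilde{\partial}\delta_{\mathcal{S}}(\overline{x})\subseteq\{\omega\overline{x}\mid\omega\in\mathbb{R}\}$, which follows from $\widetilde{\partial}\delta_{\mathcal{S}}(\overline{x})\subseteq\widehat{\partial}\delta_{\mathcal{S}}(\overline{x})$ recorded in Remark~\ref{remark-Fsubdiff}(i); an explicit check also confirms the reverse inclusion using the identity $\langle\omega\overline{x},x'-\overline{x}\rangle=-\tfrac{\omega}{2}\|x'-\overline{x}\|^{2}$ valid for all $x',\overline{x}\in\mathcal{S}$, which makes the liminf in the definition of $\widetilde{\partial}$ finite.

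The only step requiring genuine care is identifying $\widehat{\mathcal{N}}_{\mathcal{S}}(\overline{x})$ and upgrading it to $\mathcal{N}_{\mathcal{S}}(\overline{x})$; once the smooth-manifold structure of $\mathcal{S}$ is invoked, the remaining equalities are formal consequences of the conic nature of normal cones and the one-dimensionality of the resulting line. I therefore expect no serious obstacle; the lemma is essentially a bookkeeping exercise on top of the standard tangent/normal cone calculus from \cite[Chapter~6]{RW98}.
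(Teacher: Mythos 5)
Your proposal is correct and follows exactly the route the paper indicates: the paper omits the proof but points to \cite[Exercise 6.7 \& Exercise 8.14]{RW98}, which are precisely the identifications (indicator subdifferentials as normal cones, and the tangent/normal cone of a smooth level set) that you use, with the remaining equalities for $\partial^{\infty}$, the recession cone, and $\widetilde{\partial}$ handled by the same routine conic bookkeeping. Your explicit verification that $\widetilde{\partial}\delta_{\mathcal{S}}(\overline{x})$ in fact equals the whole line $[\![\overline{x}]\!]$ is a correct bonus beyond the stated inclusion.
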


 \subsection{Generalized Subdifferentials of \texorpdfstring{$h$}{h}}\label{sec3.2}

  First we provide the generalized subdifferentials of the zero-norm.
  Its (regular) subdifferentials are seen in \cite{Le13}.
  Here we supplement its horizon subdifferential, which along with the result of
  \cite{Le13} implies its regularity.
 \begin{lemma}\label{Gsubdiff-znorm}
  Let $h(x)=\nu\|x\|_0$ for $x\in\mathbb{R}^p$. Fix an arbitrary
  $\overline{x}\in\mathbb{R}^p$. Then,
  \[
   \widehat{\partial}h(\overline{x}) = \partial h(\overline{x})
   =\big\{\xi\in\mathbb{R}^p\!:~\xi_i=0 ~{\rm for}~ i\in{\rm supp}(\overline{x})\big\}
   =\partial^{\infty}h(\overline{x})=[\widehat{\partial} h(\overline{x})]^{\infty}.
  \]
 \end{lemma}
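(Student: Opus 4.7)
The plan is to leverage the result of \cite{Le13} for the regular and limiting subdifferentials and add the proximal part, the horizon part, and the recession cone identity. Throughout, write $I:=\mathrm{supp}(\overline{x})$ and let $S:=\{\xi\in\mathbb{R}^p\mid \xi_i=0\ \text{for}\ i\in I\}$, the target set.

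First I would handle $\widetilde{\partial}h(\overline{x})=\widehat{\partial}h(\overline{x})$. Since $\widetilde{\partial}h(\overline{x})\subseteq\widehat{\partial}h(\overline{x})$ holds in general and $\widehat{\partial}h(\overline{x})=S$ by \cite{Le13}, it suffices to show $S\subseteq \widetilde{\partial}h(\overline{x})$. Fix $\xi\in S$ and choose $\varepsilon<\min_{i\in I}|\overline{x}_i|$. For any $x'$ with $0<\|x'-\overline{x}\|<\varepsilon$, one has $\mathrm{supp}(x')\supseteq I$; writing $J:=\mathrm{supp}(x')\setminus I$, the numerator in the defining quotient equals $\nu|J|-\sum_{i\in J}\xi_i x'_i$ because $\xi_i=0$ on $I$. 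If $J=\emptyset$ the quotient vanishes; if $|J|\ge 1$, Cauchy--Schwarz gives $|\sum_{i\in J}\xi_i x'_i|\le\|\xi\|\|x'-\overline{x}\|$, so the numerator is at least $\nu-\|\xi\|\|x'-\overline{x}\|\ge \nu/2$ for $\|x'-\overline{x}\|$ small. Hence the ratio is either $0$ or tends to $+\infty$, and the liminf is $\ge 0>-\infty$. This is the only mildly delicate step; everything else reduces to support bookkeeping.

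Next I would treat the horizon subdifferential. For the inclusion $\partial^{\infty}h(\overline{x})\subseteq S$, take any generating sequence $x^k\xrightarrow[h]{}\overline{x}$, $\lambda^k\downarrow 0$, $v^k\in\widehat{\partial}h(x^k)$ with $\lambda^k v^k\to v$. The convergence $h(x^k)\to h(\overline{x})$ means $\nu\|x^k\|_0\to \nu\|\overline{x}\|_0$; because $\|\cdot\|_0$ is integer-valued and $\mathrm{supp}(x^k)\supseteq I$ for large $k$, necessarily $\mathrm{supp}(x^k)=I$ eventually. Then $v^k\in S$ by \cite{Le13}, so $\lambda^k v^k\in S$, and $S$ is closed, giving $v\in S$. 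For the reverse inclusion $S\subseteq\partial^{\infty}h(\overline{x})$, given $\xi\in S$ use the constant sequence $x^k\equiv\overline{x}$, $\lambda^k:=1/k$, $v^k:=k\xi$; then $v^k\in\widehat{\partial}h(\overline{x})=S$ and $\lambda^k v^k=\xi$.

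Finally, the identity $[\widehat{\partial}h(\overline{x})]^{\infty}=\widehat{\partial}h(\overline{x})$ is immediate because $S$ is a linear subspace, hence a closed convex cone equal to its own recession cone. Assembling the four chunks yields the full chain of equalities. I do not expect any serious obstacle: the only piece requiring a short computation is the quadratic lower bound in the proximal-subdifferential step, which is controlled by the jump of size $\nu$ that any newly activated coordinate contributes to $h$.
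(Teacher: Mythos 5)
Your proposal is correct and follows essentially the same route as the paper: the horizon-subdifferential argument (support stabilization for one inclusion, the constant sequence $x^k\equiv\overline{x}$, $\lambda^k=1/k$, $v^k=k\xi$ for the other) and the recession-cone identity are exactly the paper's, which otherwise simply cites \cite[Theorem 1]{Le13} for the first three equalities. The only difference is that you additionally supply a direct verification of $S\subseteq\widetilde{\partial}h(\overline{x})$ via the $\nu$-jump lower bound, which is a correct, self-contained substitute for part of that citation.
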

 \begin{proof}
  The first two equalities hold by \cite[Theorem 1]{Le13}.
  Write $J={\rm supp}(\overline{x})$ and
  $\Xi=\big\{\xi\in\mathbb{R}^p:\ \xi_J = 0\big\}$.
  We next prove that $\Xi=\partial^{\infty} h(\overline{x})$,
  i.e., the third equality holds.
  Let $\overline{v}\in\partial^{\infty} h(\overline{x})$.
  By Definition \ref{Gsubdiff-def}, there exist $x^k\xrightarrow[h]{}\overline{x}$,
  $\lambda^k \downarrow 0$ and $v^k \in \widehat{\partial}h(x^k)$ such that
  $\lambda^kv^k \rightarrow \overline{v}$ as $k\to\infty$. From $x^k\xrightarrow[h]{}\overline{x}$,
  it follows that ${\rm supp}(x^k)=J$ for all sufficiently large $k$.
  Indeed, from $x^k \rightarrow \overline{x}$ we have
  ${\rm supp}(x^k) \supseteq {\rm supp}(\overline{x})$, which along with
  $h(x^k) \rightarrow h(\overline{x})$ implies that ${\rm supp}(x^k)={\rm supp}(\overline{x})=J$.
  Then, $v_J^k=0$ for all large enough $k$.
  and $v_J^k=0$ for all large enough $k$.
  Along with $\lambda^kv_J^k \rightarrow \overline{v}_J$, we have $\overline{v}_J=0$.
  Then, $\partial^{\infty} h(\overline{x})\subseteq\Xi$.
  Conversely, take an arbitrary $\overline{v}\in\Xi$.
  Let $x^k=\overline{x}$, $\lambda^k=\frac{1}{k}$ and $v^k = k\overline{v}$ for each $k$.
  Clearly, $x^k\xrightarrow[h]{}\overline{x}$ and $v^k\in\widehat{\partial}h(x^k)$
  with $\lambda^kv^k \rightarrow \overline{v}$. So, $\overline{v}\in\partial^{\infty}h(\overline{x})$
  and $\Xi\subseteq\partial^{\infty} h(\overline{x})$ follows. Thus, $\Xi=\partial^{\infty}h(\overline{x})$.
 Recall that $\widehat{\partial} h(\overline{x})$ is closed and convex.
 Since $0 \in \widehat{\partial} h(\overline{x})$ and
 $tv \in \widehat{\partial} h(\overline{x})$ for any $v\in\widehat{\partial} h(\overline{x})$
 and $t \geq 0$, by \cite[Theorem 8.3]{Roc70} we have
 $\widehat{\partial} h(\overline{x})=[\widehat{\partial} h(\overline{x})]^{\infty}$.
 The last equality holds. \qed
 \end{proof}

  The following lemma provides the generalized subdifferentials of $h=\delta_{\Omega}$
 at $\overline{x}\in\Omega$. Since its proof can be found in \cite{Bauschke14},
 here we omit it.
 \begin{lemma}\label{Ncone-L0ball}
  Fix an arbitrary $\overline{x}\in\Omega$. Let $J={\rm supp}(\overline{x})$
  and $\overline{J}=\{1,\ldots,p\}\backslash J$.
  \begin{itemize}
   \item[(i)] If $\|\overline{x}\|_0=\kappa$, then
               \(
                \widetilde{\mathcal{N}}_{\Omega}(\overline{x})
                =\widehat{\mathcal{N}}_{\Omega}(\overline{x})
                =\big\{v\in\mathbb{R}^p:\ v_J=0\big\}
                =\mathcal{N}_{\Omega}(\overline{x}).
              \)

   \item [(ii)] If $\|\overline{x}\|_0<\kappa$, then
                \(
                 \widetilde{\mathcal{N}}_{\Omega}(\overline{x})
                 =\{0\}=\!\widehat{\mathcal{N}}_{\Omega}(\overline{x})\subseteq\!
                  \mathcal{N}_{\Omega}(\overline{x})=\Gamma
                 \)
                with $\Gamma$ defined by
                \begin{equation}\label{Gamma}
                  \Gamma:=\big\{v\in\mathbb{R}^p:\ \exists \widehat{J}\subseteq \overline{J}\
                 \ {\rm with}\ |\widehat{J}|=\kappa-\!|J|\ {\rm such\ that}\ v_{J\cup\widehat{J}}=0\big\}.
                \end{equation}
  \end{itemize}
  \end{lemma}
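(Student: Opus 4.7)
The plan is to treat the two cases separately, exploiting the fact that locally around $\overline{x}$ the set $\Omega$ has a particularly simple structure determined by $\|\overline{x}\|_0$. For case (i) with $|J|=\kappa$, I would first establish the local identification $\Omega \cap U = L \cap U$ on a small neighborhood $U$ of $\overline{x}$, where $L := \{x \in \mathbb{R}^p : x_{\overline{J}} = 0\}$. Indeed, continuity of the coordinate functions ensures that every $x$ close enough to $\overline{x}$ has $x_i \ne 0$ for each $i \in J$, so $|{\rm supp}(x)| \geq \kappa$; if also $x \in \Omega$, equality forces ${\rm supp}(x) = J$, i.e., $x \in L$. Since $L$ is a linear subspace, all four normal cones at $\overline{x}$ collapse to $L^{\perp} = \{v \in \mathbb{R}^p : v_J = 0\}$, delivering case (i).

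For case (ii) with $|J| < \kappa$, I would first dispatch the proximal and regular normal cones in a single step. The key observation is that $|J|+1 \leq \kappa$, so $\overline{x} \pm t e_i \in \Omega$ for every $i \in \{1,\dots,p\}$ and all sufficiently small $t > 0$, where $e_i$ is the $i$-th standard basis vector. Substituting these test points into the defining limit inequalities of $\widetilde{\mathcal{N}}_\Omega(\overline{x})$ and $\widehat{\mathcal{N}}_\Omega(\overline{x})$ forces $v_i = 0$ in each coordinate, yielding $\widetilde{\mathcal{N}}_\Omega(\overline{x}) = \widehat{\mathcal{N}}_\Omega(\overline{x}) = \{0\}$.

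The identification $\mathcal{N}_\Omega(\overline{x}) = \Gamma$ will be the main work. For $\Gamma \subseteq \mathcal{N}_\Omega(\overline{x})$, given $v \in \Gamma$ with $v_{J \cup \widehat{J}} = 0$, I would construct $x^k := \overline{x} + k^{-1} \sum_{i \in \widehat{J}} e_i$; then ${\rm supp}(x^k) = J \cup \widehat{J}$ has cardinality $\kappa$, so case (i) applied at $x^k$ gives $v \in \widehat{\mathcal{N}}_\Omega(x^k)$, and $x^k \to \overline{x}$ shows $v \in \mathcal{N}_\Omega(\overline{x})$. For the reverse inclusion, take any sequence $(x^k, v^k) \to (\overline{x}, v)$ with $v^k \in \widehat{\mathcal{N}}_\Omega(x^k)$. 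Continuity forces ${\rm supp}(x^k) \supseteq J$ for large $k$; if $|{\rm supp}(x^k)| < \kappa$, the previous paragraph's argument applied at $x^k$ yields $v^k = 0$; otherwise ${\rm supp}(x^k) = J \cup \widehat{J}^k$ with $|\widehat{J}^k| = \kappa - |J|$ and case (i) gives $v^k_{J \cup \widehat{J}^k} = 0$.

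The main obstacle I expect is the closing step: the index sets $\widehat{J}^k$ may vary along the sequence, but since they lie in the finite collection of $(\kappa-|J|)$-subsets of $\overline{J}$, a pigeonhole subsequence argument produces a constant $\widehat{J}^k \equiv \widehat{J}$; then coordinate-wise convergence $v^k \to v$ transfers $v^k_{J \cup \widehat{J}} = 0$ to $v_{J \cup \widehat{J}} = 0$, placing $v \in \Gamma$. This pigeonhole extraction, though elementary, is what ties together the local behavior of $\Omega$ at points of maximal and submaximal support, and is the technical heart of the argument.
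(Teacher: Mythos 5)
Your proposal is correct and follows essentially the same route as the paper: both rest on the observation that near a point of maximal support $\Omega$ behaves like the coordinate subspace $L=\{x\ |\ x_{\overline{J}}=0\}$, that at submaximal support one may perturb along any $\pm e_i$ while staying in $\Omega$, and that along defining sequences the supports eventually stabilize (your pigeonhole step, which the paper asserts slightly more loosely). Your packaging of part (i) as the local identity $\Omega\cap U=L\cap U$ plus locality of the three normal cones is a somewhat cleaner way to obtain all the equalities at once than the paper's coordinate-by-coordinate verification from the definitions, and your use of the constant sequence $v^k\equiv v$ in proving $\Gamma\subseteq\mathcal{N}_{\Omega}(\overline{x})$ avoids the paper's unnecessary perturbation of $\xi$ on $\overline{J}\backslash\widehat{J}$.
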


 \subsection{Regular Zero-Norm Composite Functions}\label{sec3.3}

 We first argue that $h+\delta_{\mathcal{S}}$ is regular,
 which requires the following lemma.
 \begin{lemma}\label{Gsubdiff-sphere1}
  Let $\psi\!:\mathbb{R}^p\to]-\infty,+\infty]$ be a proper lsc function.
  Consider an arbitrary point $\overline{x}\in{\rm dom}\psi\cap\mathcal{S}$.
  If $\psi$ is regular at $\overline{x}$ and
  $\partial^{\infty}\psi(\overline{x})\subseteq[\![\overline{x}]\!]^{\perp}$, then
  \[
   \widehat{\partial}(\psi+\delta_{\mathcal{S}})(\overline{x})
   \!=\partial(\psi+\delta_{\mathcal{S}})(\overline{x})
   =\partial\psi(\overline{x})+\partial\delta_{\mathcal{S}}(\overline{x})
   \!=\partial^{\infty}(\psi+\delta_{\mathcal{S}})(\overline{x})
   \!=[\widehat{\partial}(\psi+\delta_{\mathcal{S}})(\overline{x})]^{\infty}.
  \]
 \end{lemma}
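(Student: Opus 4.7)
The plan is to derive all five equalities from the Rockafellar--Wets sum rule for limiting and horizon subdifferentials (Corollary 10.9 in \cite{RW98}), applied to the pair $(\psi,\delta_{\mathcal{S}})$. By Lemma \ref{Gsubdiff-sphere}, $\delta_{\mathcal{S}}$ is subdifferentially regular at $\bar x$ with $\widehat\partial\delta_{\mathcal{S}}(\bar x) = \partial\delta_{\mathcal{S}}(\bar x) = \partial^\infty\delta_{\mathcal{S}}(\bar x) = [\![\bar x]\!]$, so regularity of both summands is immediate from the hypothesis on $\psi$. What remains is to verify the Mordukhovich qualification $\partial^\infty\psi(\bar x) \cap \bigl(-\partial^\infty\delta_{\mathcal{S}}(\bar x)\bigr) = \{0\}$, which by Lemma \ref{Gsubdiff-sphere} reduces to $\partial^\infty\psi(\bar x) \cap [\![\bar x]\!] = \{0\}$.

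To verify this qualification, I would use the hypothesis $\widehat\partial\psi(\bar x) = \partial\psi(\bar x) \subseteq [\![\bar x]\!]^\perp$ (regularity of $\psi$) together with a limiting argument based on Definition \ref{Gsubdiff-def}: for any $\bar v \in \partial^\infty\psi(\bar x)$, take defining sequences $x^k \xrightarrow[\psi]{} \bar x$, $\lambda^k\downarrow 0$, and $v^k \in \widehat\partial\psi(x^k)$ with $\lambda^k v^k \to \bar v$, and pass to the limit using the closedness of $[\![\bar x]\!]^\perp$ and the $\psi$-attentive convergence to place $\bar v$ in $[\![\bar x]\!]^\perp$. The intersection with $[\![\bar x]\!]$ is then trivial.

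With the qualification in hand, the sum rule yields simultaneously (i) $\psi + \delta_{\mathcal{S}}$ is regular at $\bar x$, so $\widehat\partial(\psi+\delta_{\mathcal{S}})(\bar x) = \partial(\psi+\delta_{\mathcal{S}})(\bar x)$; (ii) $\partial(\psi+\delta_{\mathcal{S}})(\bar x) = \partial\psi(\bar x) + \partial\delta_{\mathcal{S}}(\bar x)$; and (iii) $\partial^\infty(\psi+\delta_{\mathcal{S}})(\bar x) = \partial^\infty\psi(\bar x) + [\![\bar x]\!]$. Items (i)--(ii) deliver the first three equalities of the lemma. For the fourth, I would combine (iii) with the observation that both $\partial^\infty\psi(\bar x)$ and $\partial\psi(\bar x)$ lie in $[\![\bar x]\!]^\perp$, which under the orthogonal decomposition $\mathbb{R}^p = [\![\bar x]\!]^\perp \oplus [\![\bar x]\!]$ forces $\partial^\infty\psi(\bar x) + [\![\bar x]\!] = \partial\psi(\bar x) + [\![\bar x]\!]$. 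For the last equality, the set $\partial\psi(\bar x) + [\![\bar x]\!]$ contains the line $[\![\bar x]\!]$ and its projection onto $[\![\bar x]\!]^\perp$ coincides, under the regularity hypothesis, with its own recession cone, so it equals $[\widehat\partial(\psi+\delta_{\mathcal{S}})(\bar x)]^\infty$.

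The main obstacle I anticipate is the qualification step, i.e., propagating the pointwise orthogonality $\partial\psi(\bar x) \subseteq [\![\bar x]\!]^\perp$ to the horizon level $\partial^\infty\psi(\bar x) \subseteq [\![\bar x]\!]^\perp$. Since $\partial^\infty\psi(\bar x)$ is assembled from regular subgradients at nearby points $x^k$, where the orthogonality hypothesis has no direct content, the $\psi$-attentive limiting argument combined with the regularity of $\psi$ at $\bar x$ is the technical crux; the same ingredients also underpin the final recession-cone identification.
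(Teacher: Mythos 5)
Your overall strategy coincides with the paper's: both apply the sum rule of \cite[Corollary 10.9]{RW98} to the pair $(\psi,\delta_{\mathcal{S}})$ after checking the qualification condition, with Lemma \ref{Gsubdiff-sphere} supplying everything needed about $\delta_{\mathcal{S}}$. The difference is in how the qualification is verified, and this is where your argument has a genuine gap. You propose to prove $\partial^{\infty}\psi(\overline{x})\subseteq[\![\overline{x}]\!]^{\perp}$ by taking defining sequences $x^k\xrightarrow[\psi]{}\overline{x}$, $\lambda^k\downarrow 0$, $v^k\in\widehat{\partial}\psi(x^k)$ and ``passing to the limit.'' But the hypothesis $\partial\psi(\overline{x})\subseteq[\![\overline{x}]\!]^{\perp}$ constrains subgradients only at $\overline{x}$, not at nearby points $x^k\neq\overline{x}$ --- as you yourself observe --- so the vectors $\lambda^kv^k$ need not lie anywhere near $[\![\overline{x}]\!]^{\perp}$, and neither closedness of that subspace nor $\psi$-attentive convergence can rescue the limit. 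The sequential route is the wrong tool. The paper sidesteps sequences entirely: it takes $u+v=0$ with $v=\omega\overline{x}$ (by Lemma \ref{Gsubdiff-sphere}), uses $\langle u,\overline{x}\rangle=0$ and $\|\overline{x}\|=1$ to force $\omega=0$, hence $u=v=0$. If one does want the horizon-level inclusion, the correct mechanism is precisely the regularity of $\psi$ at $\overline{x}$: for a regular function with $\partial\psi(\overline{x})\neq\emptyset$ one has $\partial^{\infty}\psi(\overline{x})=[\partial\psi(\overline{x})]^{\infty}$ (cf.\ \cite[Corollary 8.11]{RW98}), and the horizon cone of a subset of the subspace $[\![\overline{x}]\!]^{\perp}$ remains in that subspace. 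This is a pointwise recession-cone computation, not a limiting one.

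A second, smaller problem is your justification of the fourth equality. From the facts that $\partial^{\infty}\psi(\overline{x})$ and $\partial\psi(\overline{x})$ both lie in $[\![\overline{x}]\!]^{\perp}$ you conclude $\partial^{\infty}\psi(\overline{x})+[\![\overline{x}]\!]=\partial\psi(\overline{x})+[\![\overline{x}]\!]$; this is a non sequitur, since adding the same line to two different subsets of its orthogonal complement does not make them equal (project back onto $[\![\overline{x}]\!]^{\perp}$ to see this). What is actually needed is $\partial\psi(\overline{x})=[\partial\psi(\overline{x})]^{\infty}$, i.e.\ that $\partial\psi(\overline{x})$ coincides with its own recession cone. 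This holds in all of the paper's applications, where $\partial\psi(\overline{x})$ is a subspace by Lemmas \ref{Gsubdiff-znorm} and \ref{Ncone-L0ball}, but it is not a consequence of the stated hypotheses alone; to be fair, the paper's own one-line appeal to \cite[Corollary 10.9]{RW98} glosses over the same point.
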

 \begin{proof}
  Let $u\in\partial^{\infty}\psi(\overline{x})$ and
  $v\in\partial^{\infty}\delta_{\mathcal{S}}(\overline{x})$
  be such that $u+v=0$. By Lemma \ref{Gsubdiff-sphere}, there exists $\omega\in\mathbb{R}$
  such that $v=\omega\overline{x}$, and hence
  \(
    u+\omega \overline{x}=0.
  \)
  Since $\partial^{\infty}\psi(\overline{x})\subseteq[\![\overline{x}]\!]^{\perp}$,
  we have $\langle u,\overline{x}\rangle=0$. Together with $u+\omega \overline{x}=0$
  and $\overline{x}\in\mathcal{S}$, we get $\omega=0$,
  and then $u=v=0$. The result follows by \cite[Corollary 10.9]{RW98}. \qed
  \end{proof}

  By Lemma \ref{Gsubdiff-znorm} and \ref{Ncone-L0ball}, the assumption of
 Lemma \ref{Gsubdiff-sphere1} is satisfied by the functions
 $\psi(\cdot)=\nu\|\cdot\|_0$ at any $x\in\mathbb{R}^p$
 and $\psi=\delta_{\Omega}$ at those $x\in\Omega$ with $\|x\|_0=\kappa$.
 Then, from Lemma \ref{Gsubdiff-sphere1},
 we immediately get the following result.
 \begin{proposition}\label{prop31}
  If $h(x)=\nu\|x\|_0$ for $x\in\mathbb{R}^p$,
  then for any $\overline{x}\in\mathcal{S}$ we have
  \[
    \widehat{\partial}(\delta_{\mathcal{S}}\!+h)(\overline{x})
    =\partial(\delta_{\mathcal{S}}\!+h)(\overline{x})
    =\delta_{\mathcal{S}}(\overline{x})\!+\partial h(\overline{x})
    =\partial^{\infty}(\delta_{\mathcal{S}}\!+h)(\overline{x})
    =[\widehat{\partial}(\delta_{\mathcal{S}}\!+h)(\overline{x})]^{\infty}.
 \]
 If $h=\delta_{\Omega}$, for any $\overline{x}\in\mathcal{S}$ with
 $\|\overline{x}\|_0=\kappa$, the last equalities hold;
 and for any $\overline{x}\in\mathcal{S}$
 with $\|\overline{x}\|_0<\kappa$, it holds that
 $\partial(\delta_{\mathcal{S}}+h)(\overline{x})
   \subseteq \partial\delta_{\mathcal{S}}(\overline{x})+\partial h(\overline{x})$.
 \end{proposition}

  The following proposition states which class of proper closed convex
  functions $\psi$ is such that $\psi+h$ is regular, whose proof is found
  in Appendix C. When $h(\cdot)=\nu\|\cdot\|_0$, this proposition extends
  the result of \cite[Lemma 3.3]{Feng19}.
   \begin{proposition}\label{prop32}
  {\bf(i)} When $h(\cdot)=\nu\|\cdot\|_0$, if $\psi\!:\mathbb{R}^p\to]-\infty,+\infty]$
  is a proper closed piecewise linear function, then
  for any $\overline{x}\in\mathbb{R}^p$ with $\partial\psi(\overline{x})\ne\emptyset$,
  \begin{align}\label{first-group}
    \widehat{\partial}(\psi\!+\!h)(\overline{x})
     &=\partial\psi(\overline{x})\!+\!\partial h(\overline{x})
     =\partial(\psi\!+\!h)(\overline{x}),\\
     \partial^{\infty}(\psi\!+\!h)(\overline{x})
    &=\partial^{\infty}\psi(\overline{x})+\partial^{\infty} h(\overline{x})
    =[\partial\psi(\overline{x})\!+\!\partial h(\overline{x})]^{\infty};
  \end{align}
  if $\psi$ is an indicator of some closed convex set $C\subseteq\mathbb{R}^p$,
  then for any $\overline{x}\in C$ with ${\rm ri}(C)\cap\{x\in\mathbb{R}^p\,|\,
  x_i=0\ {\rm for}\ i\notin{\rm supp}(\overline{x})\}\ne\emptyset$,
  the last equalities also hold.
  \[
    \widehat{\partial}(\psi\!+\!h)(\overline{x})
    =\partial\psi(\overline{x})\!+\!\partial h(\overline{x})
    =\!\partial(\psi\!+\!h)(\overline{x})
    =\!\partial^{\infty}(\psi\!+\!h)(\overline{x})
    =[\widehat{\partial}(\psi\!+\!h)(\overline{x})]^{\infty};
  \]

  \noindent
  {\bf(ii)} When $h=\delta_{\Omega}$, these equalities hold at any $\overline{x}\in{\rm dom}\psi$
  with $\|\overline{x}\|_0=\kappa$; and at any $\overline{x}\in{\rm dom}\psi$
  with $\|\overline{x}\|_0<\kappa$ it holds that
  $\partial(\psi+h)(\overline{x})
  \subseteq\partial\psi(x)+\partial h(\overline{x})$.
  \end{proposition}
  \begin{remark}
   When $\psi$ is a locally Lipschitz regular function, the first part of
   Proposition \ref{prop32} still holds by invoking \cite[Theorem 9.13(b) \& Corollary 10.9]{RW98}.
  \end{remark}

 \subsection{Kurdyka-{\L}ojasiewicz Property}\label{sec3.4}

 \begin{definition}\label{KL-Def1}
  Let $f\!:\mathbb{R}^p\!\to]-\!\infty,\infty]$ be a proper function.
  The function $f$ is said to have the Kurdyka-{\L}ojasiewicz (KL) property
  at $\overline{x}\in{\rm dom}\,\partial\!f$ if there exist $\eta\in]0,\infty]$,
  a continuous concave function $\varphi\!:[0,\eta[\to\mathbb{R}_{+}$ satisfying
  \begin{itemize}
    \item [(i)] $\varphi(0)=0$ and $\varphi$ is continuously differentiable on $]0,\eta[$;

    \item[(ii)] for all $s\in]0,\eta[$, $\varphi'(s)>0$,
  \end{itemize}
  and a neighborhood $\mathcal{U}$ of $\overline{x}$ such that for all
  \(
    x\in\mathcal{U}\cap\big[f(\overline{x})<f<f(\overline{x})+\eta\big],
  \)
  \[
    \varphi'(f(x)-f(\overline{x})){\rm dist}(0,\partial\!f(x))\ge 1.
  \]
  If $\varphi$ can be chosen as $\varphi(s)=c\sqrt{s}$
  for some $c>0$, then $f$ is said to have the KL property at $\overline{x}$
  with an exponent of $1/2$. If $f$ has the KL property of exponent $1/2$
  at each point of ${\rm dom}\,\partial\!f$,
  then $f$ is called a KL function of exponent $1/2$.
 \end{definition}
 \begin{remark}\label{KL-remark}
  To show that a proper function is a KL function of exponent $1/2$,
  it suffices to verify if it has the KL property of exponent $1/2$
  at all critical points since, by \cite[Lemma 2.1]{Attouch10},
  it has this property at all noncritical points.
 \end{remark}

 \section{Kurdyka-{\L}ojasiewicz Property of Exponent 1/2 of \texorpdfstring{$\Theta$}{Theta}}\label{sec4}
  In this section, we shall establish the KL property of exponent $1/2$
  for the function $\Theta$ on its critical point set under the following
  assumption on $\theta$:
 \begin{assumption}\label{assump1}
  The proper lsc function $\theta$ satisfies the following conditions:
 \begin{itemize}
  \item[(i)] $\theta$ is continuous relative to the set ${\rm dom}\,\partial\theta$;

  \item[(ii)] $\theta$ is regular at every point of ${\rm dom}\,\partial\theta$;

  \item[(iii)] for every $x\in{\rm dom}\partial\theta$,
              $\partial(\theta+h)(x)\subseteq\partial\theta(x)+\partial h(x)$;

  \item[(iv)] for every $I\subset\{1,\ldots,p\}$, $g_{I}:=f_{I}+\theta_{I}$
               is a KL function of exponent $1/2$, where
               $f_{\!I}(z):=f(E_{I}z)$ and $\theta_{I}(z)\!:=\theta(E_{I}z)$
               for $z\in\mathbb{R}^{|I|}$.
  \end{itemize}
 \end{assumption}

 Assumption \ref{assump1}(i)-(iii) are the common requirement in dealing with
 nonsmooth functions, and Assumption \ref{assump1}(iv) seems to be a little
 more restricted. In the sequel, we provide several classes of
 examples to satisfy this condition.

 We first achieve the KL property of exponent $1/2$ of $\Theta$ with $h\equiv\nu\|\cdot\|_0$.
 \begin{theorem}\label{KLznorm-Theta}
  Suppose that $h\equiv\nu\|\cdot\|_0$ and Assumption \ref{assump1} holds.
  Then the function $\Theta$ has the KL property of exponent $1/2$
  at all critical points.
 \end{theorem}
 \begin{proof}
  Fix an arbitrary $\overline{x}\in{\rm crit}\Theta$. Let $J:={\rm supp}(\overline{x})$,
  and $g_{\!J}$ be defined as in Assumption \ref{assump1}(iv).
  Let $g\equiv f+\theta$. Obviously, $g_{\!J}(z)=g(E_{\!J}z)$ for $z\in\mathbb{R}^{|J|}$.
  Since $g_{\!J}$ is a KL function of exponent $1/2$, there exist
  $\delta_1>0,\eta_1>0$ and $c_{1}>0$ such that
  for all $z\in\mathbb{B}(\overline{x}_{\!J},\delta_1)
  \cap[g_{\!J}(\overline{x}_{\!J})<g_{\!J}<g_{\!J}(\overline{x}_{\!J})+\eta_1]$,
  \begin{equation}\label{KL-gfun}
   \mbox{dist}(0,\partial g_{\!J}(z))
   \ge c_1\sqrt{g_{\!J}(z)-g_{\!J}(\overline{x}_{\!J})}\,.
  \end{equation}
  Take $\eta_2\in]0,{\nu}/{3}[$. By Assumption \ref{assump1}(i),
  there exists $\delta_2>0$ such that
  \begin{equation}\label{continuity1}
    |g(x)-g(\overline{x})|<\eta_2
    \quad\ \forall x\in\mathbb{B}(\overline{x},\delta_2)\cap{\rm dom}\partial\theta.
  \end{equation}
  Take $\delta=\min(\delta_1,\delta_2)$ and $\eta=\min(\eta_1,\eta_2)$.
  Pick an arbitrary $x$ from the set
  $\mathbb{B}(\overline{x},\delta)\cap[\Theta(\overline{x})<\Theta<\Theta(\overline{x})+\eta]$.
  We proceed the arguments by two cases.

  \noindent
  {\bf Case 1:} $x\in{\rm dom}\partial\Theta$. By the expression of $\Theta$
  and \cite[Exercise 8.8]{RW98}, we have
  \begin{equation}\label{subdiff1-Theta}
    \partial\Theta(x)
    \subseteq \nabla\!f(x)+\partial\theta(x)+\nu\partial\|\cdot\|_0(x)
    =\partial g(x)+\nu\partial\|\cdot\|_0(x)
  \end{equation}
  where the inclusion is also by Assumption \ref{assump1}(iii).
  This means that $x\in{\rm dom}\partial\theta$.
  Also, we have $g(x)>g(\overline{x})$ (if not, by combining
  $g(x)\le g(\overline{x})$ and \eqref{continuity1} with
  $\Theta(\overline{x})<\Theta(x)<\Theta(\overline{x})+\eta$,
  one may obtain a contradiction
  $\|\overline{x}\|_0\leq\|x\|_0-1<\|\overline{x}\|_0+\frac{1}{\nu}(\eta +\eta_2)-1<\|\overline{x}\|_0$).
  Together with $\Theta(x)<\Theta(\overline{x})+\eta$,
  we deduce that $\|x\|_0\le\|\overline{x}\|_0$.
  In addition, by reducing $\delta$ if necessary, we also have $\|x\|_0\ge\|\overline{x}\|_0$.
  Thus, $\|x\|_0 = \|\overline{x}\|_0$. Notice that ${\rm supp}(x)\supseteq {\rm supp}(\overline{x})$
  (if necessary by shrinking the value of $\delta$). Hence,
  the following relation holds:
  \begin{equation}\label{indx-equa}
    {\rm supp}(x)={\rm supp}(\overline{x})=J.
  \end{equation}
  Now by invoking \eqref{subdiff1-Theta} and Lemma \ref{Gsubdiff-znorm},
  there exists $\zeta^*\in\partial g(x)$ such that
  \begin{align}\label{subdiff-fg}
   \mbox{dist}(0, \partial \Theta(x))
   &\ge \mbox{dist}\big(0,\partial g(x)\!+\nu\partial\|\cdot\|_0(x)\big)\nonumber\\
   &=\min_{\zeta\in\partial g(x),\xi\in\nu\partial\|\cdot\|_0(x)}\!\|\zeta+\xi\|
   =\|\zeta_J^*\|.
  \end{align}
  Recall that $\theta_{\!J}(z)\equiv\theta(E_{\!J}z)$ for $z\in\mathbb{R}^{|J|}$.
  By \cite[Theorem 10.6]{RW98} and Assumption \ref{assump1}(ii),
  $\widehat{\partial}\theta_{\!J}(x_{\!J})\supseteq
  E_{\!J}^{\mathbb{T}}\widehat{\partial}\theta(E_{\!J}x_{\!J})
  =E_{\!J}^{\mathbb{T}}\partial\theta(E_{\!J}x_{\!J})$.
  From $g_{\!J}(z)\!=f(E_{\!J}z)+\theta(E_{\!J}z)$,
  \begin{align*}
   \partial g_{\!J}(x_{\!J})&\supseteq E_{\!J}^{\mathbb{T}}\nabla\!f(E_{\!J}x_{\!J})
    +\widehat{\partial}\theta_{\!J}(x_{\!J})
    \supseteq E_{\!J}^{\mathbb{T}}\nabla\!f(E_{\!J}x_{\!J})
    +E_{\!J}^{\mathbb{T}}\partial\theta(E_{\!J}x_{\!J})\\
    &=E_{\!J}^{\mathbb{T}}[\nabla\!f(E_{\!J}x_{\!J})+\partial\theta(E_{\!J}x_{\!J})]
    =E_{\!J}^{\mathbb{T}}\partial g(E_{\!J}x_{\!J})
  \end{align*}
  where the first inclusion and the last equality is due to \cite[Exercise 8.8]{RW98}.
  In addition, from $\zeta^*\in\partial g(x)$, we have
  $\zeta_{J}^*\in E_{\!J}^{\mathbb{T}}\partial g(x)
    =E_{\!J}^{\mathbb{T}}\partial g(E_{\!J}x_{\!J})$.
 From the last equation, it follows that $\zeta_{\!J}^*\in\partial g_{\!J}(x_{\!J})$.
  Thus, along with \eqref{subdiff-fg}, we obtain
  \begin{equation}\label{subdiff-gJ1}
    \mbox{dist}(0, \partial \Theta(x))
    \ge\|\zeta_J^*\|\ge{\rm dist}(0,\partial g_{\!J}(x_{\!J})).
  \end{equation}
  Recall that $x\in[\Theta(\overline{x})<\Theta<\Theta(\overline{x})+ \eta]$.
  By invoking \eqref{indx-equa}, it follows that
  \[
    \Theta(x)=g(E_{\!J}x_{\!J})+|J|=g_{\!J}(x_{\!J})+|J|
    \ \ {\rm and}\ \
    \Theta(\overline{x})=g_{\!J}(\overline{x}_{\!J})\!+\!|J|.
  \]
  Thus, $x_{\!J}\in[g_{\!J}(\overline{x}_{\!J})<g_{\!J}<g_{\!J}(\overline{x}_{\!J})+\eta_1]$.
  Since $x_{\!J}\in\mathbb{B}(\overline{x}_{\!J},\delta_1)$,
  by \eqref{subdiff-gJ1} and \eqref{KL-gfun},
  \[
    \mbox{dist}(0,\partial\Theta(x))
    \ge{\rm dist}(0,\partial g_{\!J}(x_{\!J}))
    \geq c_1\sqrt{g_{\!J}(x_{\!J})\!-\!g_{\!J}(\overline{x}_{\!J})}
    =c_1\sqrt{\Theta(x)\!-\!\Theta(\overline{x})}.
  \]

  \noindent
  {\bf Case 2:} $x\notin{\rm dom}\partial\Theta$. In this case, $\partial\Theta(x)=\emptyset$,
  and $\mbox{dist}(0,\partial\Theta(x))=\infty$. This means that
  the last inequality automatically holds.

  Now by the arbitrariness of $x$ in
  $\mathbb{B}(\overline{x},\delta)\cap [\Theta(\overline{x})<\Theta<\Theta(\overline{x})+ \eta]$,
  the last inequality shows that the function $\Theta$ has the KL property of exponent $1/2$
  at $\overline{x}$. By the arbitrariness of $\overline{x}$ in ${\rm crit}\Theta$,
  the desired result follows.  \qed
 \end{proof}

  By Remark \ref{KL-remark}, Theorem \ref{KLznorm-Theta} shows that
  $\Theta$ with $h\equiv\nu\|\cdot\|_0$ is a KL function of exponent $1/2$
  if the associated $\theta$ satisfies Assumption \ref{assump1}.
  We next illustrate that it can be satisfied
  by several classes of proper lsc functions.
 \begin{example}\label{example41}
  Let $f(x)\!:=x^{\mathbb{T}}Ax$ and $\theta(x):=\delta_{\mathcal{S}}(x)$
  for $x\in\mathbb{R}^p$, where $A$ is a $p\times p$ symmetric matrix.
  Assumption \ref{assump1} (ii)-(iii) holds by Lemma \ref{Gsubdiff-sphere}
  and Proposition \ref{prop31}, respectively.
  For any $I\subseteq\{1,\ldots,p\}$, it is easy to check that
  $g_{I}(z)=2z^{\mathbb{T}}A_{I\!I}z+\delta_{\mathcal{S}}(z)$
  for $z\in\mathbb{R}^{|I|}$. By Lemma \ref{manifold-KL} in Appendix A and
  \cite[Theorem 1]{Liu18}, $g_{I}$ is a KL function of exponent $1/2$. Thus,
  $\Theta$ associated to such $f$ and $\theta$ is the KL function of exponent $1/2$.
  Though the result of \cite[Theorem 1]{Liu18} implies that $g_{\!I}$ is
  a KL function of exponent $1/2$, its proof is not easy to follow for the reader.
  We provide a concise proof in Appendix B.
 \end{example}

 \begin{example}\label{example42}
  Consider $f(x):=\frac{1}{2}x^{\mathbb{T}}Mx+b^{\mathbb{T}}x$ and
  $\theta(x):=\delta_{\mathcal{P}}(x)$ for $x\in\mathbb{R}^p$,
  where $M$ is a $p\times p$ symmetric matrix and $b\in\mathbb{R}^{p}$
  is a vector, and $\mathcal{P}\subseteq\mathbb{R}^p$ is a polyhedral set.
  Assumption \ref{assump1} (iii) holds by Proposition \ref{prop32}.
  For any $I\subseteq\{1,\ldots,p\}$, since $\partial g_{I}$ is
  a polyhedral multifunction, by \cite[Proposition 1]{Robinson81}
  $\partial g_{I}$ is metrically subregular at every point of its graph.
  From \cite[Lemma 3.1]{Luo92} or the proof of \cite[Corollary 5.2]{LiPong18},
  we know that \cite[Assumption 3.1]{PanLiu19} holds, and then $g_{I}$
  is a KL function of exponent $1/2$ by \cite[Theorem 3.1(ii)]{PanLiu19}.
  Thus, $\Theta$ associated to such $f$ and $\theta$ is the KL function of exponent $1/2$.
  It is worthwhile to point out that the result cannot be got by
  using \cite[Corollary 5.2]{LiPong18} since the zero-norm is
  discontinuous relative to ${\rm dom}\partial\Theta$.
  Now by invoking \cite[Corollary 3.1]{LiPong18}, we conclude that
  the following $\Phi_1$ is a KL function of exponent $1/2$:
  \[
    \Phi_1(x):=\min_{1\le i\le m}\Big\{\frac{1}{2}x^{\mathbb{T}}M_ix+b_i^{\mathbb{T}}x
    +\delta_{\mathcal{P}_i}(x)\Big\}+\nu\|x\|_0\quad\forall x\in\mathbb{R}^p,
  \]
  where $M_i,\,i=1,\ldots,m$ are $p\times p$ symmetric matrices,
  and each $\mathcal{P}_i$ is polyhedral.
  \end{example}
 \begin{example}\label{example43}
  Let $f(x)\!:=\!\phi(Ax)$ for $x\in\!\mathbb{R}^p$ where
  $A=\![a_1\ \ldots\ a_n]^{\mathbb{T}}\in\mathbb{R}^{n\times p}$
  and $\phi(z):=\sum_{i=1}^n\log\big[1+\exp(-b_iz_i)\big]$
  with each $b_i\in\{-1,1\}$. Let $\theta(x)\equiv 0$
  for $x\in\mathbb{R}^p$. For each $I\subseteq\{1,\ldots,p\}$,
  by Lemma \ref{composite} in Appendix C, $g_{I}(z)=f(E_{I}z)$
  for $z\in\mathbb{R}^{|I|}$ is a KL function of exponent $1/2$.
  So, the zero-norm regularized logistic
  regression function $\Theta$ has the KL property
  of exponent $1/2$.
 \end{example}
 \begin{example}\label{example44}
  Let $f(x)\equiv 0$ and $\theta(x)\!:=\|Ax\!-b\|_q+\frac{\gamma}{2}\|Ax-\!Au\|^2$
  for $x\in\!\mathbb{R}^p$, where $A\in\mathbb{R}^{n\times p},b\in\mathbb{R}^n$
  and $u\in\mathbb{R}^p$ are the given data, $q\in[1,+\infty]$ is a real number and
  $\gamma>0$ is a parameter. Notice that $\theta(x)\equiv\phi(Ax)$ with
  the strongly convex $\phi(z):=\|z-b\|_q+\frac{\gamma}{2}\|z-\!Au\|^2$
  for $z\in\mathbb{R}^p$. By Lemma \ref{composite},
  for each $I\subseteq\{1,\ldots,p\}$, $g_{I}(z)=\theta(E_{I}z)$ for
  $z\in\mathbb{R}^{|I|}$ is a KL function of exponent $1/2$.
  So, the $\Theta$ associated to such $f$ and $\theta$
  is a KL function of exponent $1/2$.
 \end{example}
 \begin{example}\label{example45}
  Let $f(x)\equiv 0$ and $\theta(x)\!:=\|Ax\!-b\|_q+\frac{\gamma}{2}\|x\|^2+\delta_{\Delta}(x)$
  for $x\in\!\mathbb{R}^p$, where $\Delta\!:=\{x\in\mathbb{R}_{+}^p\!:\,\sum_{i=1}^px_i=1\}$,
  and $A\in\mathbb{R}^{n\times p},b\in\mathbb{R}^n,q,\gamma$ are same as above.
  For each $I\subseteq\{1,\ldots,p\}$, $g_{I}(z)=\theta(E_{I}z)$ for $z\in\mathbb{R}^{|I|}$
  is a closed proper strongly convex function, and is a KL function of
  exponent $1/2$ by Lemma \ref{composite}. The $\Theta$ associated to such $f$
  and $\theta$ is a KL function of exponent $1/2$.
 \end{example}

 Next we focus on the KL property of exponent $1/2$ of $\Theta$ with $h\equiv\delta_{\Omega}$.
 \begin{theorem}\label{KLzconstr-Theta}
  Suppose that $h=\delta_{\Omega}$ and Assumption \ref{assump1} holds.
  Then the function $\Theta$ has the KL property of exponent $1/2$
  at all critical points.
 \end{theorem}
 \begin{proof}
  Fix an arbitrary $\overline{x}\in{\rm crit}\Theta$.
  We proceed the arguments by two cases.

  \medskip
  \noindent
  {\bf Case 1: $\|\overline{x}\|_0=\kappa$}. Let $J:={\rm supp}(\overline{x})$,
  and let $g_{\!J}$ be defined as in Assumption \ref{assump1} (iv).
  Write $g\equiv f+\theta$. Since $g_{\!J}$ is a KL function of exponent $1/2$,
  there exist $\delta>0,\eta>0$ and $c>0$ such that
  for all $z\in\mathbb{B}(\overline{x}_{\!J},\delta)
  \cap[g_{\!J}(\overline{x}_{\!J})<g_{\!J}<g_{\!J}(\overline{x}_{\!J})+\eta]$,
  \begin{equation}\label{KL1-gfun}
  \mbox{dist}(0,\partial g_{\!J}(z)) \geq c\sqrt{g_{\!J}(z)-g_{\!J}(\overline{x}_{\!J})}
  \end{equation}
  Fix an arbitrary $x\in\mathbb{B}(\overline{x},\delta)
  \cap[\Theta(\overline{x})<\Theta<\Theta(\overline{x})+\eta]$.
  Due to Lemma \ref{Ncone-L0ball}(i), by using the same arguments as
  those for Theorem \ref{KLznorm-Theta}, one can get the result.

  \medskip
  \noindent
  {\bf Case 2: $\|\overline{x}\|_0<\kappa$}. Let
  $\mathcal{I}\!:=\!\{I\!:\,\{1,\ldots,p\}\supseteq I\supseteq {\rm supp}(\overline{x})\}$.
  For each $I\in\!\mathcal{I}$, by Assumption \ref{assump1} (iv)
  $g_{I}$ is a KL function of exponent $1/2$. So, there are $\delta_{I}>0$,
  $\eta_{I}>0$ and $c_{I}>0$ such that for all $z\in\mathbb{B}(\overline{x}_{I},\delta_{I})
  \cap[g_{I}(\overline{x}_{I})<g_{I}<g_{I}(\overline{x}_{I})+\eta_{I}]$,
  \begin{equation}\label{KL-gIfun}
  \mbox{dist}(0,\partial g_{I}(z)) \geq c_{I}\sqrt{g_{I}(z)-g_{I}(\overline{x}_I)}.
  \end{equation}
  In addition, by the continuity there exists $\delta_1>0$ such that
  for all $x'\in\mathbb{B}(\overline{x},\delta_1)$,
  ${\rm supp}(x')\supseteq{\rm supp}(\overline{x})$.
  Notice that $\mathcal{I}$ contains a finite number of index sets. Set
  \[
   \delta=\min(\delta_1,\min_{I\in\mathcal{I}}\delta_{I}),\ \eta:=\min_{I\in\mathcal{I}}\eta_{I}
   \ \ {\rm and}\ \ c:=\min_{I\in\mathcal{I}}c_{I}.
  \]
  Pick any $x\in\mathbb{B}(\overline{x},\delta)
  \cap[\Theta(\overline{x})<\Theta<\Theta(\overline{x})+\eta]$.
  Consider the following subcases.

  \noindent
  {\bf Subcase 2.1:} $x\in{\rm dom}\partial\Theta$. Now since
  ${\rm dom}\partial\Theta\subseteq{\rm dom}\Theta$, we have
  $x\in{\rm dom}\theta\cap\Omega$ and
  $J:={\rm supp}(x)\supseteq{\rm supp}(\overline{x})$.
  By \cite[Exercise 8.8]{RW98} and Assumption \ref{assump1}(iii),
  \begin{equation}\label{subdiff-Theta}
    \partial\Theta(x)
    \subseteq \nabla\!f(x)+\partial\theta(x)+\mathcal{N}_{\Omega}(x)
    =\partial g(x)+\mathcal{N}_{\Omega}(x).
  \end{equation}
  By combining \eqref{subdiff-Theta} with Lemma \ref{Ncone-L0ball},
  there exists $\zeta^*\in\partial g(x)$ such that
  \[
   \mbox{dist}(0, \partial \Theta(x))
   \ge \mbox{dist}\big(0,\partial g(x)\!+\mathcal{N}_{\Omega}(x)\big)
   =\min_{\zeta\in\partial g(x),\xi\in\mathcal{N}_{\Omega}(x)}\!\|\zeta+\xi\|
   \ge\|\zeta_J^*\|.
  \]
  In addition, from the proof of Theorem \ref{KLznorm-Theta}, we know that
  $\zeta_{\!J}^*\in\partial g_{\!J}(x_{\!J})$. Thus,
  \begin{equation}\label{subdiff-gJ}
   \mbox{dist}(0, \partial \Theta(x))
   \ge\|\zeta_J^*\|\ge{\rm dist}(0,\partial g_{\!J}(x_{\!J})).
  \end{equation}
  Recall that $x\in[\Theta(\overline{x})<\Theta<\Theta(\overline{x})+ \eta]$.
  From $J={\rm supp}(x)\supseteq{\rm supp}(\overline{x})$, we get
  \begin{equation*}
   \Theta(x)=g(x)=g(E_{\!J}x_{\!J})=g_{\!J}(x_{\!J})\ \ {\rm and}\ \
   \Theta(\overline{x})=g(\overline{x})=g(E_{\!J}\overline{x}_{\!J})
    =g_{\!J}(\overline{x}_{\!J}).
  \end{equation*}
  Thus, $x_{\!J}\in[g_{\!J}(\overline{x}_{\!J})<g_{\!J}<g_{\!J}(\overline{x}_{\!J})+\eta_1]$.
  Since $x_{\!J}\in\mathbb{B}(\overline{x}_{\!J},\delta_1)$,
  by \eqref{subdiff-gJ} and \eqref{KL-gIfun},
  \[
    \mbox{dist}(0,\partial\Theta(x))
    \ge{\rm dist}(0,\partial g_{\!J}(x_{\!J}))
    \ge c_{\!J}\sqrt{g_{\!J}(x_{\!J})\!-\!g_{\!J}(\overline{x}_{\!J})}
    \ge c\sqrt{\Theta(x)\!-\!\Theta(\overline{x})}.
  \]

  \noindent
  {\bf Subcase 2.2:} $x\notin{\rm dom}\partial\Theta$. In this case, $\partial\Theta(x)=\emptyset$,
  and $\mbox{dist}(0,\partial\Theta(x))=\infty$. This means that
  the last inequality automatically holds.

  Now by the arbitrariness of $x$ in
  $\mathbb{B}(\overline{x},\delta)\cap [\Theta(\overline{x})<\Theta<\Theta(\overline{x})+ \eta]$,
  the last inequality shows that the function $\Theta$ has the KL property of exponent $1/2$
  at $\overline{x}$. By the arbitrariness of $\overline{x}$ in ${\rm crit}\Theta$,
  the desired result follows.  \qed
 \end{proof}

 By Remark \ref{KL-remark}, Theorem \ref{KLzconstr-Theta} shows that
  $\Theta$ with $h\equiv\delta_{\Omega}$ is a KL function of exponent $1/2$
  if the associated $\theta$ satisfies Assumption \ref{assump1}.
  Thus, when $h\equiv\delta_{\Omega}$, the function $\Theta$
  associated to those $f$ and $\theta$ in Example \ref{example41}-\ref{example45}
  has the KL property of exponent $1/2$ at every point of ${\rm crit}\Theta$.

  \begin{remark}\label{remark-theorem}
   Consider the function $\Theta$ involves $\widetilde{h}:=h+\delta_{\mathbb{R}_{+}^p}$
   instead of $h$. Suppose that Assumption \ref{assump1} (iii) is replaced by the following condition:
   \begin{description}
     \item[(iii')] for every $x\in{\rm dom}\partial\theta$,
               $\partial(\theta+\widetilde{h})(x)\subseteq\!\partial\theta(x)\!+\partial\widetilde{h}(x)$.
  \end{description}
  Then, by following the proofs of Theorem \ref{KLznorm-Theta} and \ref{KLzconstr-Theta}
  and using the relation $\partial\widetilde{h}(\overline{x})
  \subseteq \partial h(\overline{x})+\partial\delta_{\mathbb{R}_{+}^p}(\overline{x})
  \subseteq \partial h(\overline{x})$ by Proposition \ref{prop32},
  it is not hard to show that the conclusions of two theorems still hold.
  Thus, the function $\Theta$ involves $\widetilde{h}$ and
  those those $f$ and $\theta$ in Example \ref{example41}-\ref{example45}
  is still a KL function of exponent $1/2$, for example, the function
  \(
    \Theta(x)\!:=\!x^{\mathbb{T}}Ax +\delta_{\mathcal{S}\cap\mathbb{R}_{+}^p}(x)+h(x).
  \)
 \end{remark}

 To close this section, we demonstrate the linear convergence phenomenon
 of the proximal gradient method (PGM) and the Nesterov's accelerated
 proximal gradient method for solving \eqref{Theta} with $\Theta$ given by
 Example \ref{example41} and \ref{example43}. Among others, the $\Theta$ in
 Example \ref{example43} is using $\widetilde{h}=\nu\|\cdot\|_0+\delta_{\mathbb{R}_{+}^p}$.
 For Example \ref{example43}, we generate the data $A,b$ in the same way
 as \cite[Section 4.1]{Wen17}; and for Example \ref{example41} we first
 generate randomly the sample matrix $X\in\mathbb{R}^{p\times n}$ whose
 each column obeys the distribution $N(0,\Sigma)$ and take $A=X^{\mathbb{T}}X$.
 Among others, the covariance matrix $\Sigma$ is generated as follows:
 let $\Sigma'$ with $\Sigma_{ij}'=0.5^{|i-j|}$ have the eigenvalue decomposition
 $\Sigma'=Q{\rm Diag}(\lambda(\Sigma'))Q^{\mathbb{T}}$, replace
 the first $k=30$ columns of $Q$ by sparse eigenvalue vectors generated randomly,
 and then set $\Sigma=Q{\rm Diag}(\lambda(\Sigma'))Q^{\mathbb{T}}$.
 Figure \ref{fig1} plots the iterate error curve, i.e.,
 the distance curve from the iterates to the final output $x^{\rm out}$ of two solvers.
 We see that the iterate sequence $\{x^k\}$ is indeed linearly convergent,
 and for the more difficult nonnegative zero-norm regularized logistic regression problem,
 the APG is remarkably superior to the PGM. This confirms the obtained result.
\begin{figure}[ht]
\begin{center}
\includegraphics[width=4.9 in]{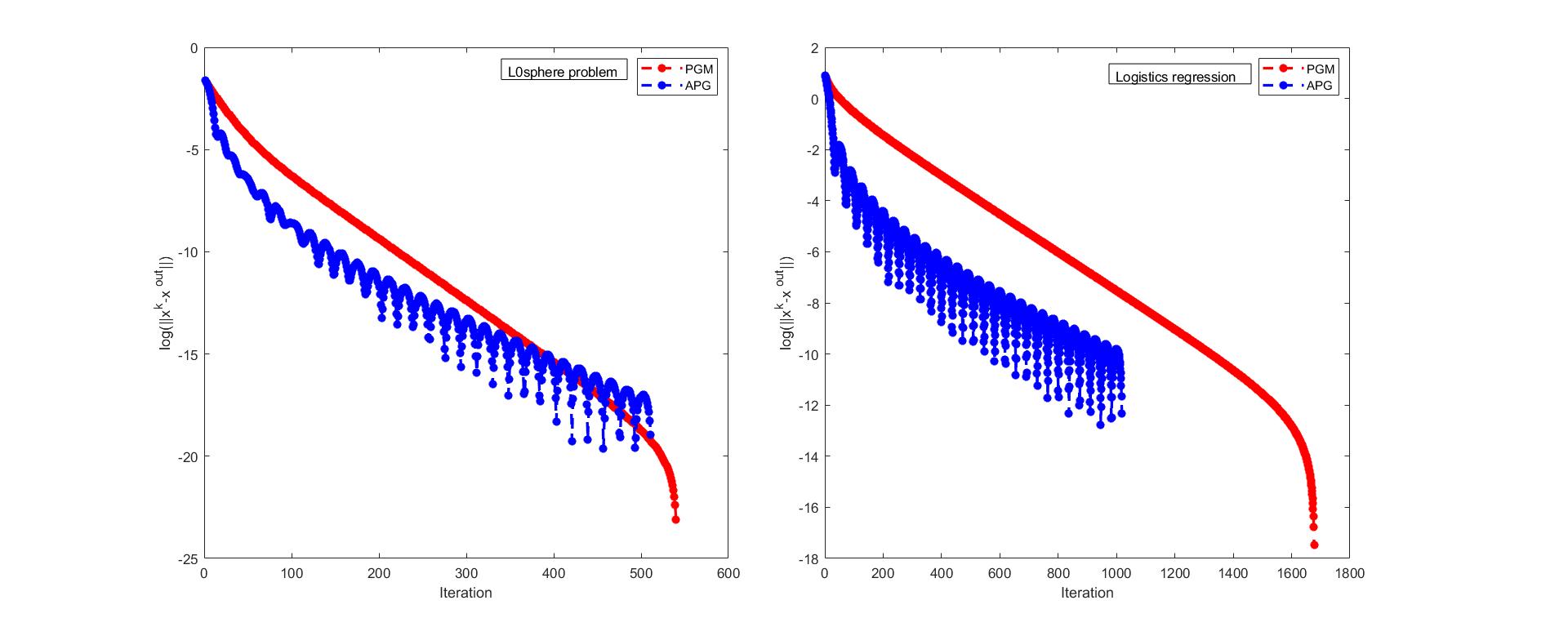}
\end{center}
\caption{The iterate errors yielded by PGM and APG for solving Example \ref{example41} and \ref{example43} }
\label{fig1}
\end{figure}

 \section{Conclusions}\label{sec6}

 Since the family of KL functions of exponent $1/2$ is lack of the stability,
 their identification is not an easy task even for convex functions.
 In this paper, we have established the KL property of exponent $1/2$ for
 the zero-norm regularized and constrained composite function $\Theta$,
 provided that the involved proper lsc function $\theta$ satisfies
 Assumption \ref{assump1}. Some specific examples for $\theta$ are
 also provided to show that such an assumption can be satisfied.
 Obviously, the obtained results are also applicable to
 matrix optimization problems which involve the loss function of
 matrix entries and the sparsity of matrix entries. Our future work will
 focus on this property for the matrix optimization problems involving
 row or column sparsity or the sparsity of singular value vectors.

\begin{acknowledgements}
{\renewcommand\baselinestretch{1.0}\selectfont
 The authors would like to give their sincere thanks to two anonymous reviewers
 for their helpful comments, which improve greatly the original manuscript.
 The research of S. H. Pan and S. J. Bi is supported by the National Natural
 Science Foundation of China under project No.11971177 and No.11701186,
 and Guangdong Basic and Applied Basic Research Foundation (2020A1515010408).
\par}
\end{acknowledgements}

\titleformat{\section}[display]
{\Large\normalsize\bfseries}{ }{0pt}{\Large}
\titleformat{\subsection}[display]
{\normalfont\normalsize\bfseries}{ }{0pt}{\normalsize}

\renewcommand{\appendixname}{ \Alph{subsection} }

\appendix
\section{Appendices}
\subsection{\appendixname :  KL Property Relative to a Manifold}
\setcounter{alemma}{0}
\renewcommand\thealemma{\Alph{subsection}.\arabic{alemma}}

 Let $\mathcal{M}\subset\mathbb{R}^p$ be a $\mathcal{C}^2$-smooth manifold
  and $f\!:\mathcal{M}\to\mathbb{R}$ be a $\mathcal{C}^2$-smooth function.
  The set of critical points of the problem
  \(
    \min_{x\in\mathcal{M}}f(x)
  \)
  is $\mathcal{X}:=\big\{x\in\mathcal{M}:\ \nabla_{\!\mathcal{M}}f(x)=0\big\}$,
  where $\nabla_{\!\mathcal{M}}f(z)$ is the projection of $\nabla\!f(z)$
  onto the tangent space $\mathcal{T}_{\mathcal{M}}(z)$ of $\mathcal{M}$ at $z$.
  We say that $f$ is a KL function of exponent $1/2$ relative to $\mathcal{M}$
  if $f$ has the KL property of exponent $1/2$ at each $\overline{x}\in\mathcal{X}$,
  i.e., there exist $\delta>0$ and $\gamma>0$ such that
  \begin{equation*}\label{gradf}\tag{13}
    \|\nabla_{\!\mathcal{M}}f(z)\| \ge \gamma\sqrt{|f(z)-f(\overline{x})|}
    \quad\ \forall z\in\mathbb{B}(\overline{x},\delta)\cap\mathcal{M}.
  \end{equation*}
  This part states the relation between the KL property of exponent $1/2$ of $f$
  relative to $\mathcal{M}$ and the KL property of exponent $1/2$ for
  its extended $\widetilde{f}(x):=f(x)+\delta_{\mathcal{M}}(x)$ for $x\in\mathbb{R}^p$.
 \begin{lemma}\label{manifold-KL}
  Let $\mathcal{M}\!\subset\!\mathbb{R}^p$ be a $\mathcal{C}^2$-smooth manifold
  and $f\!:\mathcal{M}\to\mathbb{R}$ be a $\mathcal{C}^2$-smooth function.
  If $f$ is a KL function of exponent $1/2$ relative to $\mathcal{M}$,
  then $\widetilde{f}$ is a KL function of exponent $1/2$. Conversely,
  if $\widetilde{f}$ is a KL function of exponent $1/2$ and each critical point
  is a local minimizer, then $f$ is a KL function of exponent $1/2$ relative to $\mathcal{M}$.
  \end{lemma}
 \begin{proof}
  Notice that $\partial\!\widetilde{f}(x)=\nabla\!f(x)+\mathcal{N}_{\mathcal{M}}(x)$
  for any $x\in\mathcal{M}$. Clearly, $\mathcal{X}={\rm crit}\widetilde{f}$.
  Fix an arbitrary $\overline{x}\in\mathcal{X}$. Since $f$ has the KL property
  of exponent $1/2$ relative to $\mathcal{M}$ at $\overline{x}$,
  there exist $\delta>0$ and $\gamma>0$ such that \eqref{gradf} holds
  for all $z\in\mathbb{B}(\overline{x},\delta)\cap\mathcal{M}$.
  Fix an arbitrary $\eta>0$ and an arbitrary $x\in\mathbb{B}(\overline{x},\delta)
  \cap[\widetilde{f}(\overline{x})<\widetilde{f}<\widetilde{f}(\overline{x})+\eta]$.
  Clearly, $x\in\mathcal{M}$. Moreover,
  \begin{equation*}\label{relation-f}\tag{14}
   {\rm dist}(0,\partial\!\widetilde{f}(x))
   =\|\nabla f(x)-\Pi_{\mathcal{N}_{\mathcal{M}}(x)}(\nabla\!f(x))\|
   =\|\Pi_{\mathcal{T}_{\mathcal{M}}(x)}(\nabla\!f(x))\|=\|\nabla\!f_{\mathcal{M}}(x)\|.
  \end{equation*}
  Along with \eqref{gradf}, ${\rm dist}(0,\partial\!\widetilde{f}(x))\ge\gamma\sqrt{f(x)-f(\overline{x})}$.
  So, the first part of the results follows.

  Next we focus on the second part. Fix an arbitrary $\overline{x}\in\mathcal{X}$.
  By the given assumption, clearly, $\overline{x}$ is a local optimal solution of
  \(
    \min_{x\in\mathcal{M}}f(x).
  \)
  Hence,  there exists $\varepsilon'>0$ such that
  \[
    f(z)\ge f(\overline{x})\quad\ \forall z\in\mathbb{B}(\overline{x},\varepsilon')\cap\mathcal{M}.
  \]
  By the KL property of exponent $1/2$ of $\widetilde{f}$ at $\overline{x}$,
  there exist $\varepsilon,c>0$ and $\eta>0$ such that
  \begin{equation*}\label{wf-ineq}\tag{15}
   {\rm dist}(0,\partial\!\widetilde{f}(x))\ge c\sqrt{f(x)-\widetilde{f}(x)}
   \quad\forall x\in\mathbb{B}(\overline{x},\varepsilon)
  \cap[\widetilde{f}(\overline{x})<\widetilde{f}<\widetilde{f}(\overline{x})+\eta].
  \end{equation*}
  Since $f$ is $\mathcal{C}^2$-smooth around $\overline{x}$,
  there exists $\varepsilon''>0$ such that for all
  $z\in\mathbb{B}(\overline{x},\varepsilon'')\cap\mathcal{M}$,
  \(
    f(z)<f(\overline{x})+\eta.
  \)
  Take $\delta=\min(\varepsilon,\varepsilon',\varepsilon'')$. Fix an arbitrary
  $x\in\mathbb{B}(\overline{x},\delta)\cap\mathcal{M}$. Clearly,
  \(
    f(\overline{x})\le f(x)\le f(\overline{x})+\eta.
  \)
  If $f(x)>f(\overline{x})$, then $x\in\mathbb{B}(\overline{x},\varepsilon)
  \cap[\widetilde{f}(\overline{x})<\widetilde{f}<\widetilde{f}(\overline{x})+\eta]$,
  and from \eqref{wf-ineq} and \eqref{relation-f},
  \(
    \|\nabla\!f_{\mathcal{M}}(x)\| \ge c\sqrt{|f(x)-f(\overline{x})|}.
  \)
  If $f(x)=f(\overline{x})$, this inequality holds automatically. \qed
 \end{proof}
 \subsection{\appendixname : KL Property of the Quadratic Function over a Sphere}
\setcounter{alemma}{0}
\renewcommand\thealemma{\Alph{subsection}.\arabic{alemma}}

 For any integer $m\ge 1$ and any given $m\times m$ real symmetric $H$, define
 $g(z):=z^{\mathbb{T}}Hz+\delta_{\mathcal{S}}(z)$ for $z\in\mathbb{R}^m$.
 Lemma 1 in Appendix A and \cite[Theorem 1]{Liu18} imply that $g$ is
 a KL function of exponent $1/2$. This part gives a different
 proof, which needs the following lemmas.
 \begin{alemma}\label{Spoint-g}
  The critical point set of $g$ takes the form of
  \(
    {\rm crit}g=\big\{z\in\mathcal{S}:\ Hz=\langle z,Hz\rangle z\big\}.
  \)
  So, by letting $H$ have the eigenvalue decomposition $P\Lambda P^{\mathbb{T}}$
  with $\Lambda={\rm diag}(\lambda_1,\ldots,\lambda_m)$
  for $\lambda_1\ge\cdots\ge\lambda_m$ and $P\in\mathbb{O}^m$,
  ${\rm crit}g =PW$ with
  \(
    W=\big\{y\in\mathcal{S}:\ \Lambda y=\langle y,\Lambda y\rangle y\big\}.
  \)
 \end{alemma}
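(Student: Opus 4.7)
The plan is to characterize $\mathrm{crit}\,g$ directly from the subdifferential sum rule and Lemma \ref{Gsubdiff-sphere}, and then transport the resulting equation under the orthogonal diagonalization of $H$.

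First I would note that the quadratic term $z\mapsto z^{\mathbb{T}}Hz$ is smooth with gradient $2Hz$, so for any $z\in\mathcal{S}$ the standard sum rule (e.g.\ \cite[Exercise 8.8]{RW98}) gives
\[
 \partial g(z) \;=\; 2Hz + \partial\delta_{\mathcal{S}}(z).
\]
Applying Lemma \ref{Gsubdiff-sphere}, $\partial\delta_{\mathcal{S}}(z)=\{\omega z\mid\omega\in\mathbb{R}\}$, hence $0\in\partial g(z)$ if and only if $z\in\mathcal{S}$ and there exists $\omega\in\mathbb{R}$ with $2Hz+\omega z=0$, i.e.\ $Hz=-\tfrac{\omega}{2}z$. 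Taking the inner product of this identity with $z$ and using $\|z\|=1$ fixes the scalar: $-\tfrac{\omega}{2}=\langle z,Hz\rangle$. Substituting back yields the first claimed description
\[
 \mathrm{crit}\,g \;=\; \big\{z\in\mathcal{S}\mid Hz=\langle z,Hz\rangle z\big\}.
\]

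For the second description, I would substitute the eigendecomposition $H=P\Lambda P^{\mathbb{T}}$ with $P\in\mathbb{O}^m$ and make the change of variables $y:=P^{\mathbb{T}}z$. Since $P$ is orthogonal, $z\in\mathcal{S}\Leftrightarrow y\in\mathcal{S}$ and $\langle z,Hz\rangle=\langle y,\Lambda y\rangle$; also $Hz=P\Lambda y$ and $\langle z,Hz\rangle z=\langle y,\Lambda y\rangle Py$. Multiplying the defining equation $Hz=\langle z,Hz\rangle z$ by $P^{\mathbb{T}}$ on the left then reduces it to $\Lambda y=\langle y,\Lambda y\rangle y$, which is exactly the membership condition for $W$. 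Therefore $z\in\mathrm{crit}\,g\Leftrightarrow z=Py$ for some $y\in W$, i.e.\ $\mathrm{crit}\,g=PW$.

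There is essentially no hard step here: the only point deserving care is the legitimacy of the subdifferential sum rule, which is immediate because the quadratic piece is $\mathcal{C}^1$ so no qualification condition is needed, and the explicit form of $\partial\delta_{\mathcal{S}}$ is already recorded in Lemma \ref{Gsubdiff-sphere}. The change-of-variables step is routine linear algebra, using only the orthogonality of $P$ to move between $z$ and $y=P^{\mathbb{T}}z$.
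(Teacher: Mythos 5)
Your proposal is correct and follows essentially the same route as the paper: both apply the smooth-plus-nonsmooth sum rule together with Lemma \ref{Gsubdiff-sphere} to get $\partial g(z)=2Hz+[\![z]\!]$, then pin down the multiplier by taking the inner product with $z$ on the unit sphere; the paper simply declares the orthogonal change of variables ``immediate'' where you write it out.
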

 \begin{proof}
  By \cite[Exercise 8.8]{RW98} and Lemma \ref{Gsubdiff-sphere},
  it immediately follows that for any $z\in\mathbb{R}^m$,
  \begin{equation*}\label{g-subdiff}\tag{16}
   \partial g(z)=2Hz+\partial\delta_{\mathcal{S}}(z)=2Hz+[\![z]\!].
 \end{equation*}
  Choose an arbitrary $\overline{z}\in{\rm crit}g$.
  From \eqref{g-subdiff}, there exists $\overline{t}\in\mathbb{R}$ such that
  $0=2H\overline{z}+\overline{t}\overline{z}$. Along with $\|\overline{z}\|=1$,
  we have $\overline{t}=-2\langle \overline{z},H\overline{z}\rangle$,
  and hence $\overline{z}\in\!\big\{z\in\mathcal{S}:\ Hz=\!\langle z,Hz\rangle z\big\}$.
  Consequently,
  \(
    {\rm crit}g\subseteq\big\{z\in\mathcal{S}:\ Hz=\langle z,Hz\rangle z\big\}.
  \)
  The converse inclusion is immediate to check by Lemma \ref{Gsubdiff-sphere}.
  Thus, the first part follows. The second part is immediate.  \qed
 \end{proof}
 \begin{alemma}\label{lemma-psi}
  Let $D={\rm diag}(d_1,d_2,\ldots,d_p)$ with $d_1\ge d_2\ge\cdots\ge d_p$.
  Define the function $\psi(x):=x^{\mathbb{T}}Dx+\delta_S(x)$
  for $x\in\mathbb{R}^p$. Then, $\psi$ is a KL function of exponent $1/2$.
 \end{alemma}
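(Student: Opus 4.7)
The plan is to use Remark \ref{KL-remark} to reduce the problem to verifying the KL inequality of exponent $1/2$ at an arbitrary critical point $\overline{x}\in{\rm crit}\,\psi$. By Lemma \ref{Spoint-g} (applied with $H=D$), such an $\overline{x}$ satisfies $D\overline{x}=\lambda\overline{x}$ with $\lambda:=\overline{x}^{\mathbb{T}}D\overline{x}$, and since $D$ is diagonal, $J:={\rm supp}(\overline{x})$ is contained in the ``eigenspace index set'' $\mathcal{J}:=\{i\in\{1,\ldots,p\}\,:\,d_i=\lambda\}$. I then set $\delta^{*}:=\min_{j\notin\mathcal{J}}|d_j-\lambda|>0$, which will be the key gap used to produce the exponent $1/2$.

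Next I would exploit the subdifferential formula \eqref{g-subdiff}, which for $x\in\mathcal{S}$ reads $\partial\psi(x)=2Dx+[\![x]\!]$. Minimizing $\|2Dx+tx\|^2$ over $t\in\mathbb{R}$ along the one-dimensional subspace $[\![x]\!]$ gives $t=-2x^{\mathbb{T}}Dx$, so with $\mu:=x^{\mathbb{T}}Dx$ one obtains the clean identity
\[
 {\rm dist}(0,\partial\psi(x))^{2}=4\|Dx-\mu x\|^{2}
 =4\sum_{i=1}^{p}(d_i-\mu)^{2}x_i^{2}.
\]
On the other hand, using $\|x\|^{2}=1$ and $D\overline{x}=\lambda\overline{x}$,
\[
 \psi(x)-\psi(\overline{x})=\sum_{i=1}^{p}(d_i-\lambda)x_i^{2}=\mu-\lambda,
\]
so the function gap and the deviation $\mu-\lambda$ coincide. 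This lets me restrict attention to the slice $\{x\in\mathcal{S}\,:\,0<\mu-\lambda<\eta\}$ for $\eta:=\delta^{*}/2$, without needing any spatial neighborhood restriction at all.

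The core estimate is then to compare the two sums. On the region $0<\mu-\lambda<\delta^{*}/2$, for every $i\notin\mathcal{J}$ I have $|d_i-\mu|\ge|d_i-\lambda|-|\mu-\lambda|\ge\delta^{*}/2$, and moreover $|d_i-\mu|\ge|d_i-\lambda|-\delta^{*}/2\ge|d_i-\lambda|/2$ because $|d_i-\lambda|\ge\delta^{*}$. Multiplying these two bounds yields $(d_i-\mu)^{2}\ge(\delta^{*}/4)|d_i-\lambda|$ for every $i\notin\mathcal{J}$. Combining the displays above with the triangle inequality $|\sum_{i\notin\mathcal{J}}(d_i-\lambda)x_i^{2}|\le\sum_{i\notin\mathcal{J}}|d_i-\lambda|x_i^{2}$ (and dropping the non-negative $i\in\mathcal{J}$ terms from $\|Dx-\mu x\|^{2}$), I obtain
\[
 {\rm dist}(0,\partial\psi(x))^{2}\ge 4\sum_{i\notin\mathcal{J}}(d_i-\mu)^{2}x_i^{2}
 \ge \delta^{*}\sum_{i\notin\mathcal{J}}|d_i-\lambda|x_i^{2}
 \ge \delta^{*}\bigl(\psi(x)-\psi(\overline{x})\bigr),
\]
which is exactly the KL inequality of exponent $1/2$ at $\overline{x}$ with constant $c=\sqrt{\delta^{*}}$.

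The conceptually delicate step is the indexwise inequality $(d_i-\mu)^{2}\ge(\delta^{*}/4)|d_i-\lambda|$: a naive bound of $|d_i-\mu|$ from below by a constant gives only a quadratic, not a linear, control of $|d_i-\lambda|$, which would waste the spectral gap and produce an exponent of $1$ rather than $1/2$. The trick that the bound $|d_i-\mu|\ge|d_i-\lambda|/2$ can be upgraded against the \emph{other} factor of $|d_i-\mu|$ (bounded below by the uniform constant $\delta^{*}/2$) is what delivers the $1/2$ exponent; everything else is bookkeeping. Since no spatial neighborhood of $\overline{x}$ is actually required (only the sublevel-slice), one in fact obtains a global KL inequality on $\{\psi(\overline{x})<\psi<\psi(\overline{x})+\delta^{*}/2\}$.
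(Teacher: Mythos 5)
Your proof is correct, and its core computation is the same as the paper's: the identity ${\rm dist}(0,\partial\psi(x))^{2}=4\|Dx-\mu x\|^{2}$ with $\mu=x^{\mathbb{T}}Dx$, the identity $\psi(x)-\psi(\overline{x})=\mu-\lambda=\sum_{i}(d_i-\lambda)x_i^{2}$, and the extraction of the exponent $1/2$ from the spectral gap over the index set $\{i\,:\,d_i\ne\lambda\}$ (your $\{i\notin\mathcal{J}\}$ coincides with the paper's $\overline{J}_1$, since every $i\in{\rm supp}(\overline{x})$ has $d_i=\lambda$). Where you genuinely diverge is in the localization: the paper shrinks a spatial ball $\mathbb{B}(\overline{x},\delta)$ and invokes continuity of $z\mapsto\langle z,Dz\rangle$ to obtain $|d_j-\langle z,Dz\rangle|\ge\frac{1}{2}|d_j-\lambda|$ (its inequality \eqref{same-order}), then closes the estimate with the constant $\min_{j}|d_j-\lambda|/\sqrt{\max_{j}|d_j-\lambda|}$; you instead read the bound $|\mu-\lambda|<\delta^{*}/2$ directly off the function-value window and spend one factor of $|d_i-\mu|$ on the uniform constant $\delta^{*}/2$ and the other on $|d_i-\lambda|/2$, getting the linear control $(d_i-\mu)^{2}\ge(\delta^{*}/4)|d_i-\lambda|$. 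This buys a neighborhood-free KL inequality, valid on the entire slice $[\psi(\overline{x})<\psi<\psi(\overline{x})+\delta^{*}/2]$ with the explicit constant $\sqrt{\delta^{*}}$ depending only on the critical value $\lambda$, which is slightly stronger and arguably cleaner than the local statement in the paper. One small point you should make explicit: when $d_1=\cdots=d_p$ (the paper's Case 1), your $\delta^{*}$ is a minimum over the empty set; the argument survives because $\mu\equiv\lambda$ on $\mathcal{S}$ makes the slice $[0<\mu-\lambda<\eta]$ empty, so the KL inequality holds vacuously, but as written the definition of $\delta^{*}$ silently presupposes $\mathcal{J}\ne\{1,\ldots,p\}$.
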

 \begin{proof}
  By Lemma \ref{Spoint-g} it is immediate to obtain the following
  characterization for ${\rm crit}\psi$:
  \begin{equation*}\label{crit-psi}\tag{17}
    {\rm crit}\,\psi=\big\{x\in\mathcal{S}:\ Dx=\langle x,Dx\rangle x\big\}.
  \end{equation*}
  Clearly, for each $x\in{\rm crit}\,\psi$,
  $d_i=\langle x,Dx\rangle$ with $i\in{\rm supp}(x)$.
  For any $z\in{\rm dom}\,\partial\psi$, we have
  \begin{equation*}\label{equa1-appendix}\tag{18}
  \begin{aligned}
  \mbox{dist}(0,\partial\psi(z))^2
  &= \min_{u\in \partial\psi(z)} \|u\|^2= \min_{w\in\mathbb{R}} \|2Dz + wz\|^2\\
  &=\min_{w\in\mathbb{R}}\Big\{4\langle z,D^{\mathbb{T}}Dz\rangle+w^2+4w\langle z,Dz\rangle\Big\}\\
  &= 4\langle z,D^{\mathbb{T}}Dz\rangle-4(\langle z,Dz\rangle)^2
  =4\|Dz-\langle z,Dz\rangle z\|^2.
 \end{aligned}
 \end{equation*}
 Now fix an arbitrary $\overline{x}\in{\rm crit}\,\psi$.
 From \eqref{crit-psi} it immediately follows that
 \(
   -D\overline{x}+\langle \overline{x},D\overline{x}\rangle\overline{x}=0.
 \)
 We next proceed the arguments by two cases as will be shown below.

 \noindent
 {\bf Case 1: $d_1=\cdots=d_p=\gamma$ for some $\gamma\in\mathbb{R}$}.
 Choose an arbitrary $\eta>0$ and an arbitrary $\delta>0$.
 Fix an arbitrary $x \in \mathbb{B}(\overline{x},\delta)\cap[\psi(\overline{x})<\psi(x) < \psi(\overline{x})+\eta]$.
 Clearly, $x\in\mathcal{S}$ and $\langle x,Dx\rangle=\gamma$.
 Combining $\langle \overline{x},D\overline{x}\rangle\overline{x}=D\overline{x}$
 and equation \eqref{equa1-appendix} yields that
 \[
   \mbox{dist}(0, \partial\psi(x))
   =4\|Dx-\langle x,Dx\rangle x-(D\overline{x}-\langle \overline{x},D\overline{x}\rangle\overline{x})\|
   =0.
 \]
 In addition, $\psi(x)=\psi(\overline{x})=\gamma$.
 This means that
 \(
   \mbox{dist}(0, \partial\psi(x))=\sqrt{\psi(x)-\psi(\overline{x})}.
 \)

 \noindent
 {\bf Case 2: there exist $i\ne j\in\{1,2,\ldots,p\}$ such that $d_i\ne d_j$}.
 Write $J={\rm supp}(\overline{x})$ and $\overline{J}=\{1,\ldots,p\}\backslash J$.
 By \eqref{crit-psi}, we know that $d_i=\langle\overline{x},D\overline{x}\rangle$
 for all $i\in J$. This means that there must exist an index $\kappa\in\overline{J}$
 such that $d_{\kappa}\ne\langle\overline{x},D\overline{x}\rangle$. Write
 \(
   \overline{J}_1:=\big\{i\in\overline{J}:\ d_i\ne\langle\overline{x},D\overline{x}\rangle\big\}.
 \)
 By the continuity of the function $\langle \cdot,D\cdot\rangle$, there exists $\delta>0$
 such that for all $z\in\mathbb{B}(\overline{x},\delta)\cap\mathcal{S}$,
 \begin{equation*}\label{same-order}\tag{19}
  \frac{1}{2}|d_j-\langle\overline{x},D\overline{x}\rangle|\le |d_j-\langle z,Dz\rangle|\le \frac{3}{2}|d_j-\langle\overline{x},D\overline{x}\rangle|\quad\forall j\in\overline{J}_1.
 \end{equation*}
 Choose an arbitrary $\eta>0$. Fix an arbitrary
 $x\in\mathbb{B}(\overline{x},\delta)\cap[\psi(\overline{x})<\psi(x)<\psi(\overline{x})+\eta]$.
 Clearly, $x \in \mathcal{S}$. From equation \eqref{equa1-appendix},
 it follows that
 \begin{align*}\label{ineq1-appendix}\tag{20}
   &\frac{1}{4}\mbox{dist}(0,\partial\psi(x))^2
   =\sum_{j\in\overline{J}}\big(d_j-\langle x,Dx\rangle\big)^2x_j^2
    +\sum_{j\in J}\big(d_j-\langle x,Dx\rangle\big)^2x_j^2\nonumber\\
   &=\sum_{j\in\overline{J}}\big(d_j-\langle x,Dx\rangle\big)^2x_j^2
    +\sum_{j\in J}\big(\langle\overline{x},D\overline{x}\rangle-\langle x,Dx\rangle\big)^2x_j^2\nonumber\\
   &\ge \sum_{j\in\overline{J}_1}\big(d_j-\langle x,Dx\rangle\big)^2x_j^2
   \ge \frac{1}{4}\sum_{j\in\overline{J}_1}\big(d_j-\langle\overline{x},D\overline{x}\rangle\big)^2x_j^2
 \end{align*}
 where the third equality is due to \eqref{crit-psi},
 the first inequality is by the definition of $\overline{J}_1$,
 and the last inequality is due to \eqref{same-order}. On the other hand,
 by the definition of $\psi$,
 \begin{align*}\label{ineq2-appendix}\tag{21}
  \psi(x)-\psi(\overline{x})&=\langle x,Dx\rangle-\langle\overline{x},D\overline{x}\rangle
  =\sum_{j\in\overline{J}}d_jx_j^2+\sum_{j\in J}d_jx_j^2-\langle\overline{x},D\overline{x}\rangle\|x\|^2\nonumber\\
  &=\sum_{j\in\overline{J}}\big(d_j-\langle\overline{x},D\overline{x}\rangle\big)x_j^2
    +\sum_{j\in J}\big(d_j-\langle\overline{x},D\overline{x}\rangle\big)x_j^2\nonumber\\
  &=\sum_{j\in\overline{J}}\big(d_j-\langle\overline{x},D\overline{x}\rangle\big)x_j^2
  =\sum_{j\in\overline{J}_1}\big(d_j-\langle\overline{x},D\overline{x}\rangle\big)x_j^2\nonumber\\
  &\le\sum_{j\in\overline{J}_1}|\langle\overline{x},D\overline{x}\rangle-d_j| x_j^2
  \le \max_{j\in\overline{J}_1}|d_j-\langle\overline{x},D\overline{x}\rangle|\|x_{\overline{J}_1}\|^2
 \end{align*}
 where the fourth equality is due to \eqref{crit-psi},
 the fifth one is by the definition of $\overline{J}_1$, and the inequality is since
 $\psi(x)-\psi(\overline{x})>0$. From the above inequalities \eqref{ineq1-appendix}
 and \eqref{ineq2-appendix},
 \begin{align*}
  \mbox{dist}(0, \partial\psi(x))&\ge\sqrt{\sum_{j\in\overline{J}_1}\big(d_j-\langle\overline{x},D\overline{x}\rangle\big)^2x_j^2}
  \ge \min_{j\in\overline{J}_1}|d_j-\langle\overline{x},D\overline{x}\rangle|\|x_{\overline{J}_1}\|\\
  &\ge\frac{\min_{j\in\overline{J}_1}|d_j-\langle\overline{x},D\overline{x}\rangle|}
  {\sqrt{\max_{j\in\overline{J}_1}|d_j-\langle\overline{x},D\overline{x}\rangle|}}\sqrt{\psi(x)-\psi(\overline{x})}.
 \end{align*}

 By the arbitrariness of $x$, Case 1 and 2 show that $\psi$ has the KL property with exponent $1/2$
 at $\overline{x}$. From the arbitrariness of $\overline{x}$ in ${\rm crit}\psi$,
 $\psi$ is a KL function of exponent $1/2$. \qed
 \end{proof}

 Now we prove that $g$ is a KL function of exponent $1/2$.
 Fix an arbitrary $\overline{z}\in{\rm crit}g$. Let $H$ have the eigenvalue
 decomposition as in Lemma \ref{Spoint-g}. Then $\overline{y}=P^{\mathbb{T}}\overline{z}\in{\rm crit}\psi$
  where $\psi$ is defined in Lemma \ref{lemma-psi} with $D=\Lambda$.
  By Lemma \ref{lemma-psi}, there exist $\eta>0,\delta>0$ and $c>0$ such that
  \[
    {\rm dist}(0,\partial\psi(y))\ge c\sqrt{\psi(y)-\psi(\overline{y})}
    \quad\ \forall y\in\mathbb{B}(\overline{y},\delta)\cap[\psi(\overline{y})<\psi<\psi(\overline{y})+\eta].
  \]
  Fix an arbitrary $z\in\mathbb{B}(\overline{z},\delta)\cap[g(\overline{z})<g<g(\overline{z})+\eta]$.
  Clearly, $z\in\mathcal{S}$. Write $y=P^{\mathbb{T}}z$. Then $y\in\mathcal{S}$ and
  $g(z)=\psi(y)$. Since $g(\overline{z})=g(\overline{y})$,
  \(
    y\in\mathbb{B}(\overline{y},\delta)\cap[\psi(\overline{y})<\psi(y) < \psi(\overline{y})+\eta].
  \)
  In addition, from \eqref{g-subdiff} and the eigenvalue decomposition of $H$,
  $\partial g(z)=P\partial\psi(y)$. Thus,
  \[
   {\rm dist}(0,\partial g(z))={\rm dist}(0,P\partial\psi(y))
   ={\rm dist}(0,\partial \psi(y))\ge c\sqrt{\psi(y)-\psi(\overline{y})}.
  \]
  Together with $\psi(y)-\psi(\overline{y})=g(z)-g(\overline{z})$,
  it follows that $g$ has the KL property with exponent of 1/2 at $\overline{z}$.
  By the arbitrariness of $\overline{z}$ in ${\rm crit}g$,
  $g$ is a KL function of exponent $1/2$.
 \subsection{\appendixname :  Supplementary Lemma and Proofs}
\setcounter{alemma}{0}
\renewcommand\thealemma{\Alph{subsection}.\arabic{alemma}}

  The following lemma extends the result of \cite[Section 2.3]{Karimi16}
  for the differentiable strongly convex function to the setting of
  closed proper strongly convex functions. In particular, it implies that
  the composite $g$ is a KL function of exponent $1/2$ without surjectivity of $\mathcal{A}$.
 \begin{alemma}\label{composite}
  Consider $g(x)\!:=\vartheta(\mathcal{A}x)$ for $x\in\mathbb{X}$
  where $\mathcal{A}\!:\mathbb{X}\to\mathbb{Z}$ is a linear mapping,
  and $\vartheta\!:\mathbb{Z}\to ]-\infty,\infty]$ is a proper closed
  strongly convex function with modulus $\mu$. Here, $\mathbb{X}$ and
  $\mathbb{Z}$ are two finite dimensional vector spaces equipped with
  the inner product $\langle\cdot,\cdot\rangle$ and its induced norm $\|\cdot\|$.
  If ${\rm ri}({\rm dom}\vartheta)\cap{\rm range}\mathcal{A}\ne\emptyset$,
  then there exists a constant $\overline{c}>0$ such that
  \begin{equation*}\label{aim-eq2}\tag{22}
   {\rm dist}(0, \partial g(x))\ge\frac{\sqrt{2\mu}}{\overline{c}}
   \sqrt{g(x)-g^*}\quad\ \forall x\in\mathbb{X}
  \end{equation*}
  where $g^*$ denotes the minimum value of the function $g$.
 \end{alemma}
 \begin{proof}
  Pick an arbitrary $x^*\in{\rm crit}g$ (if ${\rm crit}g=\emptyset$,
  the conclusion holds automatically). We first prove that
  ${\rm crit}g =\{x\in\mathbb{X}\!:\,\mathcal{A}x=\mathcal{A}x^*\}$.
  To this end, pick any $x'\in\mathbb{X}$ with $\mathcal{A}x'=\mathcal{A}x^*$.
  Since ${\rm ri}({\rm dom}\vartheta)\cap{\rm range}\mathcal{A}\ne\emptyset$,
  by \cite[Theorem 23.9]{Roc70}, we have
  \(
   \partial g(x')=\mathcal{A}^*\partial \vartheta(\mathcal{A}x')
   = \mathcal{A}^*\partial \vartheta(\mathcal{A}x^*) = \partial g(x^*).
  \)
  Notice that $0\in \partial g(x^*) $, we obtain $0\in \partial g(x')$,
  which implies that $x'\in{\rm crit}g$. This means that
  $\{x\in\mathbb{X}\!:\,\mathcal{A}x=\mathcal{A}x^*\}\subseteq{\rm crit}g$.
  Suppose that there exist $\overline{x}\in {\rm crit} g$ such that
  $\mathcal{A}\overline{x}\ne \mathcal{A}x^*$. Then, by the strong convexity of $\vartheta$,
  we have
  \[
 	g((\overline{x} + x^*)/2) = \vartheta ((\mathcal{A}\overline{x}+\mathcal{A}x^*)/2)
    <(g(\overline{x})+ g(x^*))/2.
  \]
  This contradicts the fact that $x', x^* \in {\rm crit}g$.
  Thus, the equality ${\rm crit} g= \{x \in \mathbb{X}\!:\,\mathcal{A}x = \mathcal{A}x^*\}$
  holds. By Hoffman inequality \cite{Hoffman52},
  there exist a constant $\overline{c}>0$ such that for any $z \in \mathbb{X}$,
  \begin{equation*}\label{eq1}\tag{23}
 	\| \Pi_{{\rm crit} g} (z) -z\| \leq \overline{c} \|\mathcal{A}(\Pi_{{\rm crit} g} (z) - z)\|,
  \end{equation*}
  where $\Pi_{{\rm crit} g}$ is the projection mapping onto ${\rm crit} g$.
  Fix an arbitrary $x\in\mathbb{X}$. If $x\notin{\rm dom}\partial g$,
  the inequality \eqref{aim-eq2} holds trivially. So, it suffices to consider
  the case $x\in{\rm dom}\partial g$. By \cite[Theorem 23.9]{Roc70},
  $\partial g(x)=\mathcal{A}^* \partial \vartheta (\mathcal{A}x)$.
  Obviously, $\partial \vartheta (\mathcal{A}x) \neq \emptyset$.
  Pick any $\xi \in \partial \vartheta (\mathcal{A}x)$. By the strong convexity of
  $\vartheta$ and \cite[Theorem 6.1.2]{Lem91}, it follows that
  \[
 	g(z) \geq g(x)+\langle\xi,\mathcal{A}(z-x)\rangle+\frac{\mu}{2}\|\mathcal{A}(z-x)\|^2
     \quad\ \forall z\in \mathbb{X}.
  \]
  By taking $z = \Pi_{{\rm crit} g}(x)$, from the last inequality we obtain that
  \begin{equation*}
  \begin{aligned}
 	g(\Pi_{{\rm crit} g}(x))
   &\ge g(x) + \langle \xi, \mathcal{A}(\Pi_{{\rm crit} g}(x)-x) \rangle
        + \frac{\mu}{2} \|\mathcal{A}(\Pi_{{\rm crit} g}(x)-x)\|^2 \\
   &\ge g(x) + \langle \xi, \mathcal{A}(\Pi_{{\rm crit} g}(x)-x)\rangle
      + \frac{\mu}{2\overline{c}^2} \|\Pi_{{\rm crit} g}(x)-x\|^2 \\
   &\ge g(x) + \min_{y\in\mathbb{X}}\left[\langle \xi, \mathcal{A}(y-x)\rangle
        + \frac{\mu}{2\overline{c}^2} \|y-x\|^2 \right]
   \ge g(x) - 0.5 (\overline{c}^2/\mu) \|\mathcal{A}^*\xi\|^2,
  \end{aligned}
  \end{equation*}
  where the second inequality follows from \eqref{eq1}.
  Note that $g(\Pi_{{\rm crit} g}(x))=g(x^*)$. The last inequality
  implies that $\|\mathcal{A}^*\xi\|^2\geq (2\mu/\overline{c}^2)[g(x)-g(x^*)]$.
  Together with $\partial g(x)=\mathcal{A}^*\partial\vartheta (\mathcal{A}x)$,
  \[
    {\rm dist}(0, \partial g(x))^2\ge\min_{\xi\in\partial\vartheta (\mathcal{A}x)}\|\mathcal{A}^*\xi\|^2
    \geq (2\mu/\overline{c}^2)[g(x)-g(x^*)].
  \]
  This implies that the desired inequality \eqref{aim-eq2} holds. \qed
 \end{proof}

 \noindent
 {\bf The proof of Proposition \ref{prop32}:}
  First, we assume that $\psi$ is a proper closed piecewise linear regular function.
  Fix any $\overline{x}\in\mathbb{R}^p$ with $\partial\psi(\overline{x})\ne\emptyset$.
  Notice that ${\rm epi}\psi$ and ${\rm epi}h$ are the union of finitely many polyhedral sets.
  Together with \cite[Proposition 1]{Robinson81} and \cite[Section 3.2]{Ioffe08},
  it follows that
  \begin{equation*}
    \partial(\psi+h)(\overline{x})\subseteq \partial\psi(\overline{x})+\partial h(\overline{x})
    \ \ {\rm and}\ \
    \partial^{\infty}(\psi+h)(\overline{x})\subseteq \partial^{\infty}\psi(\overline{x})+\partial^{\infty} h(\overline{x}).
  \end{equation*}
  By combining the two inclusions with \cite[Corollary 10.9]{RW98}
  and the regularity of $\psi$ and $h$, we conclude that $\psi+h$ are regular,
  and moreover, it holds that
  \[
     \widehat{\partial}(\psi+h)
     =\partial(\psi+h)(\overline{x})=\partial\psi(\overline{x})+\partial h(\overline{x}),\
     \partial^{\infty}(\psi+h)(\overline{x})=\partial^{\infty}\psi(\overline{x})+\partial^{\infty} h(\overline{x}).
  \]
  The first group of equalities imply that $\partial(\psi+h)(\overline{x})\ne \emptyset$
  since $\partial\psi(\overline{x})\ne\emptyset$.
  Thus, by \cite[Corollary 8.11]{RW98},
  \(
    \partial^{\infty}(\psi+h)(\overline{x})
    =[\widehat{\partial}(\psi+h)(\overline{x})]^{\infty}
    =[\partial\psi(\overline{x})+\partial h(\overline{x})]^{\infty}.
  \)

  Now we assume that $\psi=\delta_C$. Fix any $\overline{x}\in C$ with
  ${\rm ri}(C)\cap L_{\overline{x}}\ne\emptyset$,
  where $L_{\overline{x}}\!:=\{x\in\mathbb{R}^p\,|\, x_i=0\ {\rm for}\ i\notin{\rm supp}(\overline{x})\}$.
  Write $J={\rm supp}(\overline{x}),\overline{J}=\{1,\ldots,p\}\backslash J$.
  By Lemma 3.2, $\partial h(\overline{x})=\mathcal{N}_{L_{\overline{x}}}(\overline{x})$.
  We first argue that
  \begin{equation}\label{aim-inclusion}\tag{24}
    \widehat{\partial}(\delta_C\!+h)(\overline{x})
     \subseteq\partial\delta_{C\cap L_{\overline{x}}}(\overline{x}).
  \end{equation}

  \noindent
  {\bf Case 1: there exists $\widehat{x}\in[C\cap L_{\overline{x}}]\backslash\{\overline{x}\}$.}
  Pick any $v\in\widehat{\partial}(\delta_C+h)(\overline{x})$.
  By the definition of regular subgradient, it follows that
  \begin{align*}
  0&\le\liminf_{x'\rightarrow\overline{x}, x'\neq \overline{x}}\frac{h(x')+\delta_C(x')
       -h(\overline{x})-\delta_C(\overline{x})-\langle v,x'-\overline{x}\rangle}{\|x'-\overline{x}\|}\\
  &\le\liminf_{\overline{x}\ne x'\xrightarrow[C]{}\overline{x}\atop{\rm supp}(x')=J}
      \frac{h(x')-h(\overline{x})-\langle v,x'-\overline{x}\rangle}{\|x'-\overline{x}\|}
   =\liminf_{\overline{x}\ne x'\xrightarrow[C]{}\overline{x}\atop{\rm supp}(x')=J}
  \frac{-\langle v,x'-\overline{x}\rangle}{\|x'-\overline{x}\|}\\
  &=\liminf_{\overline{x}\ne x'\xrightarrow[C]{}\overline{x}\atop{\rm supp}(x')=J}
  \frac{\delta_{C\cap L_{\overline{x}}}(x')-\delta_{C\cap L_{\overline{x}}}(\overline{x})
      -\langle v,x'-\overline{x}\rangle}{\|x'-\overline{x}\|}
 \end{align*}
 where the existence of $\overline{x}\ne x'\xrightarrow[C]{}\overline{x}$ with ${\rm supp}(x')=J$
 in the second inequality is implied by $\widehat{x}\in[C\cap L_{\overline{x}}]\backslash\{\overline{x}\}$.
 The last group of inequalities imply that $v\in\widehat{\partial}\delta_{C\cap L_{\overline{x}}}(\overline{x})
 =\partial\delta_{C\cap L_{\overline{x}}}(\overline{x})$.
 By the arbitrariness of $v\in\widehat{\partial}(\delta_C+h)(\overline{x})$,
 it immediately follows that
 $\widehat{\partial}(\delta_C+h)(\overline{x})\subseteq\partial\delta_{C\cap L_{\overline{x}}}(\overline{x})$.

 \medskip
 \noindent
 {\bf Case 2: $C\cap L_{\overline{x}}=\{\overline{x}\}$.}
 Now $\partial\delta_{C\cap L_{\overline{x}}}(\overline{x})=\mathcal{N}_{C\cap L_{\overline{x}}}(\overline{x})=\mathbb{R}^p$.
 Then, it is immediate to have that
 $\widehat{\partial}(\delta_C+h)(\overline{x})\subseteq\partial\delta_{C\cap L_{\overline{x}}}(\overline{x})$.

 By combining \eqref{aim-inclusion} with \cite[Corollary 10.9]{RW98} and Lemma 3.2, it holds that
  \begin{equation*}\label{equa4}\tag{25}
  \begin{aligned}
   \partial\delta_C(\overline{x})+\partial h(\overline{x})=
    \widehat{\partial}\delta_C(\overline{x})+\widehat{\partial}h(\overline{x})
    \subseteq\widehat{\partial}(\delta_C+h)(\overline{x})\subseteq\partial(\delta_C+\delta_{L_{\overline{x}}})(\overline{x})\\
    =\partial\delta_C(\overline{x})+\partial\delta_{L_{\overline{x}}}(\overline{x})
   =\partial\delta_C(\overline{x})+\partial h(\overline{x}).
  \end{aligned}
  \end{equation*}
  where the second equality is due to ${\rm ri}C \cap L_{\overline{x}}\ne\emptyset$
  and the last one is by $\partial h(\overline{x})=\mathcal{N}_{L_{\overline{x}}}(\overline{x})$.
  In fact, from the above arguments, we conclude that
  \begin{equation}\label{equa5}\tag{26}
    \widehat{\partial}(\delta_{C}\!+\!h)(x)
    =\partial\delta_C(x)+\partial h(x)=\partial\delta_{C\cap L_{x}}(x)
  \end{equation}
  for all $x\in C\ {\rm with}\ {\rm ri}(C)\cap L_{x}\ne\emptyset$.

   Next we argue that $\partial(\delta_C\!+h)(\overline{x})\subseteq
  \partial\delta_C(\overline{x})+\partial h(\overline{x})$.
  Pick any $v\in\partial(\delta_C+h)(\overline{x})$. Then,
  there exist sequences $x^k\xrightarrow[\delta_{C}+h]{}\overline{x}$ and
  $v^k\in\widehat{\partial}(\delta_{C}\!+\!h)(x^k)$
  with $v^k\to v$ as $k\to\infty$. Since $\delta_{C}(x^k)+h(x^k)\to
  \delta_{C}(\overline{x})+h(\overline{x})$, we must have $x^k\in C$
  and $h(x^k)\to h(\overline{x})$ for all sufficiently large $k$.
  The latter, along with ${\rm supp}(x^k)\supseteq J$, implies that
  ${\rm supp}(x^k)=J$ for all sufficiently large $k$.
  From the last equality, for all sufficiently large $k$,
  \(
    v^k\in\partial\delta_{C}(x^k)+\partial h(x^k).
  \)
  By passing to the limit $k\to \infty$ and using $h(x^k)\to h(\overline{x})$,
  we obtain $v\in\partial\delta_{C}(\overline{x})+\partial h(\overline{x})$.
  By the arbitrariness of $v$ in $\partial(\delta_C+h)(\overline{x})$,
  the stated inclusion follows. In particular, together with
  $\partial(\delta_C\!+h)(\overline{x})\supseteq\widehat{\partial}(\delta_C\!+h)(\overline{x})
  =\partial\delta_C(\overline{x})+\partial h(\overline{x})$
  and \eqref{equa4},
  \begin{equation}\label{equa6}\tag{27}
    \widehat{\partial}(\delta_C\!+h)(\overline{x})
     =\partial(\delta_C\!+h)(\overline{x})
     =\mathcal{N}_{C}(\overline{x})+\partial h(\overline{x})
     =\partial\delta_{C\cap L_{\overline{x}}}(\overline{x}).
  \end{equation}

  Next we argue that $\partial^{\infty}(\delta_C\!+h)(\overline{x})=
  \partial^{\infty}\delta_{C\cap L_{\overline{x}}}(\overline{x})$.
  Pick any $u\in\partial^{\infty}(\delta_C\!+h)(\overline{x})$. Then,
  there exist sequences $x^k\xrightarrow[\delta_{C}+h]{}\overline{x}$ and
  $u^k\in\widehat{\partial}(\delta_{C}\!+\!h)(x^k)$
  with $\lambda_ku^k\to u$ for some $\lambda_k\downarrow 0$ as $k\to\infty$.
  By following the same arguments as above, we have
  ${\rm supp}(x^k)=J$ for all sufficiently large $k$.
  Together with \eqref{equa5} and $u^k\in\widehat{\partial}(\delta_{C}\!+\!h)(x^k)$,
  for all sufficiently large $k$ we have
  \(
    u^k\in\widehat{\partial}\delta_{C\cap L_{\overline{x}}}(x^k).
  \)
  Notice that $x^k\xrightarrow[C\cap L_{\overline{x}}]{}\overline{x}$.
  So, $u\in\partial^{\infty}(\delta_C\!+\!\delta_{L_{\overline{x}}})(\overline{x})
  =\partial^{\infty}\delta_{C\cap L_{\overline{x}}}(\overline{x})$.
  By the arbitrariness of $u$ in $\partial^{\infty}(\delta_C\!+h)(\overline{x})$,
  it holds that $\partial^{\infty}(\delta_C\!+h)(\overline{x})\subseteq
  \partial^{\infty}\delta_{C\cap L_{\overline{x}}}(\overline{x})$.
  Conversely, pick any $u\in\partial^{\infty}\delta_{C\cap L_{\overline{x}}}(\overline{x})$.
  Then, there exist sequences $x^k\xrightarrow[C\cap L_{\overline{x}}]{}\overline{x}$ and
  $u^k\in\widehat{\partial}\delta_{C\cap L_{\overline{x}}}(x^k)$
  with $\lambda_ku^k\to u$ for some $\lambda_k\downarrow 0$ as $k\to\infty$.
  Clearly, $(\delta_C+h)(x^k)\to(\delta_C+h)(\overline{x})$.
  Moreover, from \eqref{equa5} and $u^k\in\widehat{\partial}\delta_{C\cap L_{\overline{x}}}(x^k)$,
  we have $u^k\in\widehat{\partial}(\delta_{C}+h)(x^k)$.
  This shows that $u\in\partial^{\infty}(\delta_C+h)(\overline{x})$.
  By the arbitrariness of $u$ in $\partial^{\infty}\delta_{C\cap L_{\overline{x}}}(\overline{x})$,
  we have the converse inclusion $\partial^{\infty}(\delta_C\!+h)(\overline{x})\supseteq
  \partial^{\infty}\delta_{C\cap L_{\overline{x}}}(\overline{x})$.
  Thus, the stated equality follows.
  From \cite[Exercise 8.14 \& Proposition 8.12]{RW98},
  $\partial\delta_{C\cap L_{\overline{x}}}(\overline{x})
  =\partial^{\infty}\delta_{C\cap L_{\overline{x}}}(\overline{x})
   =[\widehat{\partial}\delta_{C\cap L_{\overline{x}}}(\overline{x})]^{\infty}$.
  Thus,
  \[
   \partial\delta_{C\cap L_{\overline{x}}}(\overline{x})
   =\partial^{\infty}\delta_{C\cap L_{\overline{x}}}(\overline{x})
   =[\widehat{\partial}\delta_{C\cap L_{\overline{x}}}(\overline{x})]^{\infty}
   =\partial^{\infty}(\delta_C\!+h)(\overline{x}).
  \]
  Together with equalities in \eqref{equa6}, we obtain the conclusion
  for $h(\cdot)=\nu\|\cdot\|_0$ and the regularity of $\delta_C\!+h$.
  By following the same arguments as above, one may obtain the second part,
  and we omit the details.
  \qed


\end{document}